\documentclass{article}
\usepackage[utf8]{inputenc}
\usepackage[letterpaper, portrait, margin=1in]{geometry}
\usepackage{url}
\usepackage{amsmath}
\usepackage{amsthm}
\usepackage{amssymb}
\usepackage{asymptote}
\usepackage{amsfonts}
\usepackage{enumitem}
\newtheorem{theorem}{Theorem}[section]
\newtheorem{lemma}[theorem]{Lemma}
\theoremstyle{definition}
\newtheorem{definition}[theorem]{Definition}
\newtheoremstyle{example}
{\topsep}
{\topsep}
{}
{}
{\bfseries}
{.}
{.5em} 
{} 
\theoremstyle{example}
\newtheorem{example}[theorem]{Example}
\newtheoremstyle{notation}
{\topsep}
{\topsep}
{}
{}
{\bfseries}
{.}
{.5em} 
{} 
\theoremstyle{notation}
\newtheorem{notation}[theorem]{Notation}
\linespread{1.7}
\title{\vspace{-5ex} \huge NIM with Cash: A Concrete Appraoch}
\date{\vspace{-10ex}}
\author{
  Douglas Chen \\
  Mount Hebron High School
  \and
  Dr. William Gasarch\\
  University of Maryland College Park
}
\begin{document}
\maketitle

\normalsize
\vspace*{0.5in}
\begin{abstract}
    Let $A$ be a finite subset of $\mathbb{N},$ and let $n\in\mathbb{N}.$ then $\text{NIM}(A;n)$ is the two player game in which players alternate removing $a\in A$ stones from a pile with $n$ stones; the first player who cannot move loses. This game has been researched thoroughly. 
    \par We discuss a variant of NIM in which Player 1 and Player 2 start with $d$ and $e$ dollars, respectively. When a player removes $a\in A$ stones from the pile, he loses $a$ dollars. The first player who cannot move loses, but this can now happen for two reasons:
    \begin{enumerate}
        \item The number of stones remaining is less than $\min(A).$
        \item The player has less than $\min(A)$ dollars.
    \end{enumerate}
    This game leads to much more interesting win conditions than regular NIM.
    \par We investigate general properties of this game. We then  obtain and prove win conditions for the sets $A=\{1,L\}$ and $A=\{1,L,L+1\}.$
\end{abstract}
\section{Introduction}

\begin{notation}We use $A \subseteq^{\mathrm{fin}} B$ to indicate that $A$ is a finite subset of $B.$ \end{notation}
\par\noindent
\begin{definition}Let $A \subseteq^{\mathrm{fin}} \mathbb{N}$ and let $n \in\mathbb{N}.$ NIM$(A;n)$ is played as follows:
\begin{itemize}
    \item There is a pile of $n$ stones, with two players, Player 1 and Player 2, alternating moves. Player 1 goes first. A player's move consists of removing $a \in A$ stones from the pile.
    \item A player loses if they cannot move. If there are less than $\min(A)$ stones remaining, then the player loses.
\end{itemize}
\end{definition}
\par\noindent
\begin{notation} When we say \textit{a player takes} $s,$ we mean that the player takes $s$ stones from the pile. \end{notation}
\begin{definition} Player 1 \textit{wins} when the player has a strategy that will make them win the game no matter what Player 2 does; similar for Player 2 \textit{wins.} If the game uses the set $A \subseteq^{\mathrm{fin}} \mathbb{N},$ we use $W_{A}(n)=1$ to denote that Player 1 wins when there are $n$ stones in the pile; similar for Player 2. With $A$ understood, we can simply use $W(n).$\end{definition}

NIM is an example of a {\it combinatorial game}. Such games
have a vast literature
(see the selected bibliography of Frankel~\cite{combgamesbib}).
Variants on the 1-pile version have included
letting the number of stones a player can remove
depend on how many stones are in the pile~\cite{NIMarb},
letting the number of stones a player can remove depend on the player~\cite{partizan},
allowing three players~\cite{threenim},
viewing the stones as cookies that may spoil~\cite{cookie},
and others.
Grundy~\cite{grundy} and Sprague~\cite{sprag} showed how to analyze
many-pile NIM games by analyzing the 1-pile NIM games that it consists of.
NIM games are appealing because they are easy to explain,
yet involve interesting (and sometimes difficult) mathematics
to analyze.

\begin{example}\label{ex:NIMwinconditions} Below are the win conditions for NIM games on the sets we are interested in: $A=\{1,L\}$ and $A=\{1,L,L+1\}.$
\begin{itemize}
    \item Let $L$ be even and $\geq 2,$ with $A=\{1,L\}.$ then $W(n)=2$ iff $n\equiv 0,2,4,\cdots,L-2\pmod{L+1}.$
    \item Let $L$ be even and $\geq 2,$ with $A=\{1,L,L+1\}.$ then $W(n)=2$ iff $n\equiv 0,2,4,\cdots,L-2\pmod{2L}.$
    \item Let $L$ be odd and $\geq 3,$ with $A=\{1,L,L+1\}.$ then $W(n)=2$ iff $n\equiv 0,2,4,\cdots,L-1\pmod{2L+1}.$
\end{itemize}
\end{example}
\begin{definition} Let $A \subseteq^{\mathrm{fin}} \mathbb{N},$ let $n\in\mathbb{N},$ and let $d,e \in \mathbb{N}\cup \infty$ ($d=\infty$ means that Player $1$ has infinite cash, similar for Player $2$). The \textit{NIM with Cash} game $\text{NIM}(A;n;d,e)$ is played as follows:
\begin{itemize}
    \item Two players, Player 1 and Player 2, alternate moves. Player 1 goes first. A player's move consists of removing $a \in A$ stones from the pile, and losing $a$ dollars.
    \item Before any playing, there are $n$ stones in the pile, with Player 1 having $d$ dollars and Player 2 having $e$ dollars.
    \item A player loses if they cannot move. Either, there are less than $\min(A)$ stones in the pile, or the player does not have enough money to take away $\min(A)$ stones. 
\end{itemize}
\end{definition}
\par\noindent
\begin{notation} Player 1 \textit{wins} when the player has a strategy that will make them win the game no matter what Player 2 does; similar for Player 2 \textit{wins.} If the game uses the set $A \subseteq^{\mathrm{fin}} \mathbb{N},$ we use $W_{A}^{\text{cash}}(n;d,e)=1$ to denote that Player 1 wins when there are $n$ stones in the pile, the player has $d$ dollars, and Player 2 has $e$ dollars; similar for Player 2. With the game understood, we will simply refer to this as $W(n;d,e).$
\end{notation}
\begin{notation}Let $A \subseteq^{\mathrm{fin}} \mathbb{N}.$ NIM$(A)$ denotes the NIM with Cash game using the set $A.$
\end{notation}
\begin{definition}\ \ \ Let $A \subseteq^{\mathrm{fin}} \mathbb{N}.$ For NIM$(A):$
\begin{itemize}
    \item We say that Player 1 wins \textit{normally} if $W(n;d,e)=1,$ and the player has enough money to employ the strategy used in regular NIM. Note that this is equivalent to $W(n;d,\infty)=1.$ Similar for Player 2.
    \item If a player wins by removing $\min(A)$ on every move, we say that the player wins \textit{miserly.}
\end{itemize}
\end{definition}
\par We are interested in determining who wins for all cases for NIM$\{1,L\}$ and NIM$\{1,L,L+1\}.$ We wrote a dynamic program that will, on input $A,n,d,e,$ determine which player wins. This program runs in time $O(n)$. In addition, this method lacks the simplicity of just a formula, as illustrated in Example \ref{ex:NIMwinconditions}. We seek this kind of formula, and note that it can be used to obtain an $O(1)$ algorithm.
\par In the following sections, we will more rigorously define the win function, as well as introduce the terms \textit{lower class}, \textit{middle class}, and \textit{upper class}. We will prove important theorems concerning these terms. In the last three sections, we obtain and prove win conditions for $A=\{1,L\},$ then $A=\{1,L,L+1\}$ ($L$ even), and $A=\{1,L,L+1\}$ ($L$ odd).

There has been one paper on NIM with Cash before~\cite{nwcold}. That paper and
this one have some overlap. The cases where either one player is  rich or both
are poor (which are easy cases) are in both papers. For the case of both players being
either rich or poor, that paper has an abstract approach which is complicated and
hard to use. This paper instead gives concrete criteria for particular examples.

\section{The Win Function}
In order to make our definition of the win function more rigorous, we will explicitly define it in this section. 
\par\noindent 
\begin{definition} Define a function $W:\mathbb{N}^3 \rightarrow \{1,2\}$ that maps the number of stones in the pile and the amount of money each player has to the player who wins ($1$ or $2$).
\end{definition}
\begin{notation} If Player 1 wins, then we write $W(n;d,e)=1$ for the NIM with Cash game (with $A$ understood), and $W(n)$ for the regular NIM game. Similar for Player 2 wins. Should we wish to conveniently describe the losing player, we will use $3-W(n;d,e)$ or $3-W(n).$
\end{notation}
\section{What is Upper Class?}
We are mainly interested with the following question: What is the minimum amount of money a player needs to win normally? 
\par
Intuitively, we want to say that a player is \textit{upper class} if they have enough money to be able to take stones from the pile the whole game and play using normal strategy. However, this is problematic because this intuition only makes sense for the winning player. We will define upper class for the losing player as well; this will be helpful for two reasons:
\begin{enumerate}
    \item If the losing player (in normal NIM) is upper class, and the winning player (in normal NIM) is not, it turns out that the losing player wins NIM with Cash.
    \item It will be useful for induction proofs later.
\end{enumerate}
\begin{definition}Let $A \subseteq^{\mathrm{fin}} \mathbb{N}.$ Define a function $U:\mathbb{N} \rightarrow \mathbb{N}$ that maps the number of stones in the pile to the minimum amount of money a player needs to be upper class. Let $U_1(n)$ and $U_2(n)$ denote these amounts for Players 1 and 2, respectively.
\end{definition}
\par
We now present a definition of the upper class function for both players, which includes the fix we mentioned earlier.
\begin{definition}\label{def:upper} Let $A \subseteq^{\mathrm{fin}} \mathbb{N}$ and let $n\in\mathbb{N}.$ Then,
\begin{itemize}
    \item If $n<\min(A),$ then $U_1(n)=U_2(n)=0$ (Player 1 cannot move, so Player 2 wins by default).
    \item If $W(n)=1,$ then 
    \begin{itemize}
        \item $U_1(n)$ is defined to be the least value of $U_2(n-a)+a$ across all $a\in A,a\leq n$ such that $W(n-a)=2.$
        \item $U_2(n)$ is defined to be the greatest value of $U_1(n-a)$ across all $a\in A,a\leq n.$
    \end{itemize}
    \item If $W(n)=2,$ then
    \begin{itemize}
        \item $U_1(n)$ is defined to be the least value of $U_2(n-a)+a$ across all $a\in A,a\leq n$ such that $U_1(n-a)=U_2(n).$
        \item $U_2(n)$ is defined to be the greatest value of $U_1(n-a)$ across all $a\in A,a\leq n.$
    \end{itemize}
\end{itemize}
\end{definition}
It is easy to check by induction on $n$ that this definition is equivalent to our intuition for winning players. Note also that our definition for $U_1(n)$ when $W(n)=2$ can be reformulated as the maximum number of stones that Player 1 can take from the pile over the course of the game across all possible cases of both players playing normally. For example, when $A=\{1,3,4\},U_1(9)=6$ (Player 2 wins) because Player 1 can take $1$ on his first move, $4$ on his second move, and $1$ on his last move; it can be easily verified that this is the optimal case, and that both players are playing normally. Similar for $U_2(n)$ when $W(n)=1.$
\par
Now, we will present two theorems (which are straightfoward proofs by induction on $n$) to answer our question for $A=\{1,L\},L$ even and $\{1,L,L+1\}.$ Note that the $A=\{1,L\}$ case for $L$ odd essentially reduces to $A={1}.$ Let $U_{\text{win}}(n)$ denote $U_1(n)$ when $W(n)=1$ or $U_2(n)$ when $W(n)=2.$ Let $U_{\text{lose}}(n)$ denote $U_1(n)$ when $W(n)=2$ or $U_2(n)$ when $W(n)=1.$ 
\\
\begin{theorem} For $A=\{1,L\},L$ even:
\begin{itemize}
    \item $U_{\text{win}}(n)=L\bigg\lfloor\dfrac{n}{L+1}\bigg\rfloor,$ for $n\equiv0\pmod{L+1}.$
    \item $U_{\text{win}}(n)=L\bigg\lfloor\dfrac{n}{L+1}\bigg\rfloor+1,$ for $n\equiv1\pmod{L+1}.$
    \item $U_{\text{win}}(n)=L\bigg\lfloor\dfrac{n}{L+1}\bigg\rfloor+1,$ for $n\equiv2\pmod{L+1}.$
    \item $U_{\text{win}}(n)=L\bigg\lfloor\dfrac{n}{L+1}\bigg\rfloor+2,$ for $n\equiv3\pmod{L+1}.$
    \item $U_{\text{win}}(n)=L\bigg\lfloor\dfrac{n}{L+1}\bigg\rfloor+2,$ for $n\equiv4\pmod{L+1}.$
    \\ \\
    $\vdots$
    \\
    \item $U_{\text{win}}(n)=L\bigg\lfloor\dfrac{n}{L+1}\bigg\rfloor+\bigg\lfloor \dfrac{L-2}{2} \bigg\rfloor,$ for $n\equiv L-2 \pmod{L+1}.$ 
    \item $U_{\text{win}}(n)=L\bigg\lfloor\dfrac{n}{L+1}\bigg\rfloor+\bigg\lfloor \dfrac{L-1}{2} \bigg\rfloor,$ for $n\equiv L-1 \pmod{L+1}.$
    \item $U_{\text{win}}(n)=L\bigg(\bigg\lfloor \dfrac{n}{L+1} \bigg\rfloor+1\bigg),$  for $n\equiv L\pmod{L+1},$
\end{itemize}
or equivalently, writing $n=k(L+1)+i,$ where $0\leq i<L+1$ gives us:
\begin{itemize}
    \item If $i<L,$ then $U_{\text{win}}(n)=Lk+\bigg\lceil \dfrac{i}{2} \bigg\rceil.$
    \item If $i=L,$ then $U_{\text{win}}(n)=L(k+1).$
\end{itemize}
Similarly, 
\begin{itemize}
    \item If $n<L,$ then $U_{\text{lose}}(n)=\bigg\lfloor \dfrac{n}{2} \bigg\rfloor.$
    \item If $n\geq L, i<L,$ then $U_{\text{lose}}(n)=L\bigg(k-\dfrac{1}{2}\bigg)+\bigg\lfloor \dfrac{i}{2} \bigg\rfloor+1.$
    \item If $i=L,$ then $U_{\text{lose}}(n)=L\bigg(k+\dfrac{1}{2}\bigg).$
\end{itemize}
\end{theorem}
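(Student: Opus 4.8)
The plan is to prove, by strong induction on $n$, closed forms for the two functions $U_1(n)$ and $U_2(n)$ directly; the theorem then follows, since $U_{\text{win}}$ and $U_{\text{lose}}$ are just $U_1,U_2$ read off according to whether $W(n)=1$ or $W(n)=2$. Writing $n=k(L+1)+i$ with $0\le i\le L$, the win conditions of Example~\ref{ex:NIMwinconditions} tell us that $W(n)=2$ exactly when $i$ is even and $i<L$, and $W(n)=1$ when $i$ is odd or $i=L$. Translating the claimed formulas through this dictionary gives a target value for $U_1(n)$ and for $U_2(n)$ in each of the residue classes $i=0$, $i$ even with $0<i<L$, $i$ odd, and $i=L$; these are what I would verify. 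Because the $U_{\text{lose}}$ formula has a genuine transient (the separate $n<L$ case), I would first check all $n\le 2L$ by hand as base cases, so that in the inductive step every value referenced, namely $U_1(n-1)$ and $U_1(n-L)$, already obeys the general ($n\ge L$) formula.

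For the inductive step I would treat the two recursions separately. First, the $U_2$ recursion: since $A=\{1,L\}$, Definition~\ref{def:upper} gives $U_2(n)=\max(U_1(n-1),U_1(n-L))$ (only the $a=1$ term survives when $n<L$). The key claim here is that $U_1(n-1)\ge U_1(n-L)$ always, so that $U_2(n)=U_1(n-1)$. I would prove this by substituting the inductive formula for $U_1$ at the residues $i-1$ (for $n-1$) and $i+1$ (for $n-L$), keeping careful track that $n-L$ sits one block lower ($k\mapsto k-1$) except when $i=L$. The comparison reduces to an inequality between the per-block offsets of $U_1$, and the $L\lfloor n/(L+1)\rfloor$ growth term makes $U_1(n-1)$ win with room to spare in every class except $i=L-1$, where the two sides tie (both equal $Lk$). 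Reading off $U_1(n-1)$ in each class then reproduces exactly the target formula for $U_2(n)$.

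Second, the $U_1$ recursion, split by the value of $W(n)$. When $W(n)=1$ (so $i$ odd or $i=L$), we have $U_1(n)=\min\{U_2(n-a)+a : a\in\{1,L\},\ W(n-a)=2\}$, and here $U_2(n-a)=U_{\text{win}}(n-a)$ since $n-a$ is a $W=2$ position. For odd $i$ with $1\le i\le L-3$ both $a=1$ and $a=L$ are legal winning moves, and the pleasant coincidence is that they yield the same value $Lk+\tfrac{i+1}{2}$; the edge residues $i=L-1$ and $i=L$ admit only one legal move and are handled directly. When $W(n)=2$ (so $i$ is even and $i<L$), we have $U_1(n)=\min\{U_2(n-a)+a : U_1(n-a)=U_2(n)\}$, and the dominance $U_1(n-1)>U_1(n-L)$ from the previous paragraph (strict here, since $i\ne L-1$ when $i$ is even) shows that the side condition $U_1(n-a)=U_2(n)=U_1(n-1)$ is met only by $a=1$. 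Thus $U_1(n)=U_2(n-1)+1$, and substituting the formula for $U_2$ at residue $i-1$ gives the claimed $U_{\text{lose}}(n)$.

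The main obstacle I anticipate is not any single inequality but the bookkeeping around the two wraparound residues together with the parities. The shift $n\mapsto n-L$ decreases $k$ by one except at $i=L$, while $n\mapsto n-1$ decreases $k$ by one only at $i=0$; combined with the fact that $\lceil i/2\rceil$ and $\lfloor i/2\rfloor$ swap roles as $i$ alternates parity, this produces a dozen-odd micro-cases that must each land on the correct floor or ceiling. The $U_{\text{lose}}$ transient at $n<L$ is the other delicate point: it is precisely the reason the base of the induction must be carried out to $n=2L$ rather than over a single block, and I would double-check that the $i=0$ and $i=L$ formulas glue correctly across the $n=L$ boundary before trusting the inductive step.
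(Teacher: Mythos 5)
Your plan is correct and is exactly the induction on $n$ via Definition~\ref{def:upper} that the paper invokes: the paper states only that this theorem has a ``straightforward proof by induction on $n$'' and omits all details, so your writeup supplies what the paper leaves out. The key points you isolate --- that $U_2(n)=U_1(n-1)$ with a tie against $U_1(n-L)$ only at residue $i=L-1$, that the two candidate values for $U_1(n)$ coincide at odd residues $i\le L-3$, and that the $U_{\text{lose}}$ transient for $n<L$ forces base cases out to $n=2L$ --- all check out against the recursion.
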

\begin{theorem} For $A=\{1,L,L+1\},L$ even:
    \begin{itemize}
        \item $U_{\text{win}}(n)=\dfrac{3}{2}L\bigg\lfloor \dfrac{n}{2L} \bigg\rfloor,$ for $n\equiv 0\pmod{2L}.$
        \item $U_{\text{win}}(n)=\dfrac{3}{2}L\bigg\lfloor \dfrac{n}{2L} \bigg\rfloor+1,$ for $n\equiv 1\pmod{2L}.$
        \item $U_{\text{win}}(n)=\dfrac{3}{2}L\bigg\lfloor \dfrac{n}{2L} \bigg\rfloor+1,$ for $n\equiv 2\pmod{2L}.$
        \item $U_{\text{win}}(n)=\dfrac{3}{2}L\bigg\lfloor \dfrac{n}{2L} \bigg\rfloor+2,$ for $n\equiv 3\pmod{2L}.$
        \item $U_{\text{win}}(n)=\dfrac{3}{2}L\bigg\lfloor \dfrac{n}{2L} \bigg\rfloor+2,$ for $n\equiv 4\pmod{2L}.$ \\ \\
        $\vdots$
        \\
        \item $U_{\text{win}}(n)=\dfrac{3}{2}L\bigg\lfloor \dfrac{n}{2L} \bigg\rfloor+\bigg\lceil \dfrac{L-2}{2} \bigg\rceil,$ for $n\equiv L-2\pmod{2L}.$
        \item $U_{\text{win}}(n)=\dfrac{3}{2}L\bigg\lfloor \dfrac{n}{2L} \bigg\rfloor+\bigg\lceil \dfrac{L-1}{2} \bigg\rceil,$ for $n\equiv L-1\pmod{2L}.$
        \item $U_{\text{win}}(n)=\dfrac{3}{2}L\bigg\lfloor \dfrac{n}{2L} \bigg\rfloor+L,$ for $n\equiv L\pmod{2L}.$
        \item $U_{\text{win}}(n)=\dfrac{3}{2}L\bigg\lfloor \dfrac{n}{2L} \bigg\rfloor+L+1,$ for $n\equiv L+1\pmod{2L}$
        \item $U_{\text{win}}(n)=\dfrac{3}{2}L\bigg\lfloor \dfrac{n}{2L} \bigg\rfloor+L+1,$ for $n\equiv L+2\pmod{2L}$
        \item $U_{\text{win}}(n)=\dfrac{3}{2}L\bigg\lfloor \dfrac{n}{2L} \bigg\rfloor+L+2,$ for $n\equiv L+3\pmod{2L}$
        \item $U_{\text{win}}(n)=\dfrac{3}{2}L\bigg\lfloor \dfrac{n}{2L} \bigg\rfloor+L+2,$ for $n\equiv L+4\pmod{2L}$ \\ \\
        $\vdots$
        \\
        \item $U_{\text{win}}(n)=\dfrac{3}{2}L\bigg\lfloor \dfrac{n}{2L} \bigg\rfloor+L+\bigg\lceil \dfrac{L-2}{2} \bigg\rceil,$ for $n\equiv 2L-2\pmod{2L}.$
        \item $U_{\text{win}}(n)=\dfrac{3}{2}L\bigg\lfloor \dfrac{n}{2L} \bigg\rfloor+L+\bigg\lceil \dfrac{L-1}{2} \bigg\rceil,$ for $n\equiv 2L-1\pmod{2L},$
    \end{itemize}
    or equivalently, writing $n=2Lk+i,$ where $0\leq i<2L$ gives us:
    \begin{itemize}
        \item If $0\leq i<L,$ then $U_{\text{win}}(n)=\dfrac{3Lk}{2}+\bigg\lceil \dfrac{i}{2} \bigg\rceil.$
        \item If $L\leq i\leq 2L-1,$ then $U_{\text{win}}(n)=\dfrac{3Lk}{2}+L+\bigg\lceil \dfrac{i-L}{2} \bigg\rceil.$
    \end{itemize}
    Similarly, 
    \begin{itemize}
        \item If $0\leq i<L+1,$ then $U_{\text{lose}}(n)=\dfrac{3Lk}{2}+\bigg\lfloor \dfrac{i}{2} \bigg\rfloor.$
        \item If $L+1\leq i\leq 2L-1,$ then $U_{\text{lose}}(n)=\dfrac{3Lk}{2}+L+\bigg\lfloor \dfrac{i-L}{2} \bigg\rfloor.$
    \end{itemize}
For $A=\{1,L,L+1\},L$ odd:
    \begin{itemize}
        \item $U_{\text{win}}(n)=\dfrac{3L+1}{2}\bigg\lfloor \dfrac{n}{2L+1} \bigg\rfloor,$ for $n\equiv 0\pmod{2L+1}.$
        \item $U_{\text{win}}(n)=\dfrac{3L+1}{2}\bigg\lfloor \dfrac{n}{2L+1} \bigg\rfloor+1,$ for $n\equiv 1\pmod{2L+1}.$
        \item $U_{\text{win}}(n)=\dfrac{3L+1}{2}\bigg\lfloor \dfrac{n}{2L+1} \bigg\rfloor+1,$ for $n\equiv 2\pmod{2L+1}.$
        \item $U_{\text{win}}(n)=\dfrac{3L+1}{2}\bigg\lfloor \dfrac{n}{2L+1} \bigg\rfloor+2,$ for $n\equiv 3\pmod{2L+1}.$
        \item $U_{\text{win}}(n)=\dfrac{3L+1}{2}\bigg\lfloor \dfrac{n}{2L+1} \bigg\rfloor+2,$ for $n\equiv 4\pmod{2L+1}.$ \\ \\
        $\vdots$
        \\
        \item $U_{\text{win}}(n)=\dfrac{3L+1}{2}\bigg\lfloor \dfrac{n}{2L+1} \bigg\rfloor+\bigg\lceil \dfrac{L-2}{2} \bigg\rceil,$ for $n\equiv L-2\pmod{2L+1}.$
        \item $U_{\text{win}}(n)=\dfrac{3L+1}{2}\bigg\lfloor \dfrac{n}{2L+1} \bigg\rfloor+\bigg\lceil \dfrac{L-1}{2} \bigg\rceil,$ for $n\equiv L-1\pmod{2L+1}.$
        \item $U_{\text{win}}(n)=\dfrac{3L+1}{2}\bigg\lfloor \dfrac{n}{2L+1} \bigg\rfloor+\bigg\lceil \dfrac{L}{2} \bigg\rceil,$ for $n\equiv L\pmod{2L+1}.$
        \item $U_{\text{win}}(n)=\dfrac{3L+1}{2}\bigg\lfloor \dfrac{n}{2L+1} \bigg\rfloor+L+1,$ for $n\equiv L+1\pmod{2L+1}$
        \item $U_{\text{win}}(n)=\dfrac{3L+1}{2}\bigg\lfloor \dfrac{n}{2L+1} \bigg\rfloor+L+1,$ for $n\equiv L+2\pmod{2L+1}$
        \item $U_{\text{win}}(n)=\dfrac{3L+1}{2}\bigg\lfloor \dfrac{n}{2L+1} \bigg\rfloor+L+2,$ for $n\equiv L+3\pmod{2L+1}$
        \item $U_{\text{win}}(n)=\dfrac{3L+1}{2}\bigg\lfloor \dfrac{n}{2L+1} \bigg\rfloor+L+2,$ for $n\equiv L+4\pmod{2L+1}$ \\ \\
        $\vdots$
        \\
        \item $U_{\text{win}}(n)=\dfrac{3L+1}{2}\bigg\lfloor \dfrac{n}{2L+1} \bigg\rfloor+L+\bigg\lceil \dfrac{L-1}{2} \bigg\rceil,$ for $n\equiv 2L-1\pmod{2L+1}.$
        \item $U_{\text{win}}(n)=\dfrac{3L+1}{2}\bigg\lfloor \dfrac{n}{2L+1} \bigg\rfloor+L+\bigg\lceil \dfrac{L}{2} \bigg\rceil,$ for $n\equiv 2L\pmod{2L+1},$
    \end{itemize}
    or equivalently, writing $n=(2L+1)k+i,$ where $0\leq i<2L+1$ gives us:
    \begin{itemize}
        \item If $0\leq i<L+1,$ then $U_{\text{win}}(n)=\dfrac{(3L+1)k}{2}+\bigg\lceil \dfrac{i}{2} \bigg\rceil.$
        \item If $L+1\leq i\leq 2L,$ then $U_{\text{win}}(n)=\dfrac{(3L+1)k}{2}+L+\bigg\lceil \dfrac{i-L}{2} \bigg\rceil.$
    \end{itemize}
    Similarly, 
    \begin{itemize}
        \item If $0\leq i<L+2,$ then $U_{\text{lose}}(n)=\dfrac{(3L+1)k}{2}+\bigg\lfloor \dfrac{i}{2} \bigg\rfloor.$
        \item If $L+2\leq i\leq 2L,$ then $U_{\text{lose}}(n)=\dfrac{(3L+1)k}{2}+L+\bigg\lfloor \dfrac{i-L}{2} \bigg\rfloor.$
    \end{itemize}
\end{theorem}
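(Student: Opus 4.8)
The plan is to prove both closed forms by a single strong induction on $n$, carrying the statements for $U_{\text{win}}$ and $U_{\text{lose}}$ simultaneously, and running the argument separately for $A=\{1,L,L+1\}$ with $L$ even (period $P=2L$) and with $L$ odd (period $P=2L+1$). The first step is to unwind Definition \ref{def:upper} into player-agnostic recurrences, reading $U_1,U_2$ as the thresholds of the first and second mover at the current pile size (this reading is what makes the recursion well posed, since taking $a$ swaps the two roles at $n-a$). Writing the winner's and loser's thresholds as $U_{\text{win}},U_{\text{lose}}$, one obtains: if $W(n)=1$ then $U_{\text{win}}(n)=\min\{U_{\text{win}}(n-a)+a : a\in A,\ a\le n,\ W(n-a)=2\}$ and $U_{\text{lose}}(n)=\max\{U_{\text{lose}}(n-a):a\in A,\ a\le n\}$; if $W(n)=2$ then $U_{\text{win}}(n)=\max\{U_{\text{win}}(n-a):a\in A,\ a\le n\}$ and $U_{\text{lose}}(n)=\min\{U_{\text{lose}}(n-a)+a:a\in A,\ a\le n,\ U_{\text{win}}(n-a)=U_{\text{win}}(n)\}$. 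Each refers only to values at $n-1,\,n-L,\,n-L-1$, so the recursion is legitimate, and Example \ref{ex:NIMwinconditions} tells us exactly which of $W(n)=1,2$ holds from $n\bmod P$.

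After establishing the base cases directly for $0\le n\le L+1$ (enough to seed all three moves), the inductive step fixes $n$, writes $n=Pk+i$ with $0\le i<P$, reads off $W(n)$ from the residue $i$, and selects the matching pair of recurrences. I then substitute the inductively known closed forms at $n-1,\,n-L,\,n-L-1$ — each with its own residue $i-1,\,i-L,\,i-L-1\pmod P$, and with $k$ decremented by one whenever the subtraction crosses a multiple of $P$ — into the relevant $\min$ or $\max$, identify the extremal move, and simplify the floor/ceiling arithmetic to recover the claimed value at $i$.

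The main obstacle is the casework. For each residue we must (i) determine which move realizes the $\min$ or $\max$; (ii) verify the side condition move-by-move ($W(n-a)=2$ for the winner's recurrence, $U_{\text{win}}(n-a)=U_{\text{win}}(n)$ for the loser's); and (iii) correctly track the period wrap-around, since taking $a=L$ or $a=L+1$ from a small residue drops into the previous block and lowers $k$ by one, which must be compensated by the $+L$ or $+L+1$ added to the threshold. The bookkeeping is most delicate at the block boundaries — residues near $0$, near $L-1,L,L+1$, and near $P-1$ — where the formula switches between $\lceil i/2\rceil$ and $L+\lceil (i-L)/2\rceil$ (respectively the floor versions), and where the parity of $L$ flips which of $\lceil\cdot\rceil,\lfloor\cdot\rfloor$ lands on a half-integer. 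Keeping the even-$L$ and odd-$L$ computations apart, and tracking that $U_{\text{lose}}$ changes its block split one unit later than $U_{\text{win}}$ (at $L+1$ versus $L$ for even $L$, at $L+2$ versus $L+1$ for odd $L$, matching the shift in the winning residues), is where almost all of the effort goes.

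As consistency checks that catch arithmetic slips, I would verify that $U_{\text{win}}$ rises by exactly $\tfrac{3L}{2}$ (even) or $\tfrac{3L+1}{2}$ (odd) across one full period — matching the interpretation of $U_{\text{win}}(n)$ as the total stones the winner removes in normal play — and that the top value of each block agrees with the bottom value of the next. These pin down the per-period increment independently of the residue-by-residue casework.
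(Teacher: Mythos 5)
Your overall strategy --- strong induction on $n$ driven by the recurrences implicit in Definition \ref{def:upper}, with the casework organized by $n \bmod P$ --- is exactly the route the paper intends (the paper gives no details beyond asserting a ``straightforward induction on $n$''), and three of your four player-agnostic recurrences are faithful translations of that definition. The fourth is not, and the error is fatal to the induction as you have set it up. When $W(n)=1$, Definition \ref{def:upper} sets $U_2(n)=\max_a U_1(n-a)$ over \emph{all} legal $a$, with no restriction to moves landing on positions where the new mover loses. After the role swap, $U_1(n-a)$ equals $U_{\text{win}}(n-a)$ when $W(n-a)=1$ and $U_{\text{lose}}(n-a)$ when $W(n-a)=2$; your recurrence $U_{\text{lose}}(n)=\max_a U_{\text{lose}}(n-a)$ silently assumes the second alternative for every $a$. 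Concretely, for $A=\{1,3,4\}$ and $n=4$ (so $W(4)=1$, with $W(3)=W(1)=1$ and $W(0)=2$), the definition gives $U_2(4)=\max\{U_1(3),U_1(1),U_1(0)\}=\max\{2,1,0\}=2$, matching the theorem's claim $U_{\text{lose}}(4)=\lfloor 4/2\rfloor=2$, whereas your recurrence yields $\max\{U_{\text{lose}}(3),U_{\text{lose}}(1),U_{\text{lose}}(0)\}=\max\{1,0,0\}=1$. The discrepancy recurs periodically (e.g.\ at $n=11$ the correct value is $7$ but your recurrence gives $6$), so the induction would either fail to close or end up proving a different formula.

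The fix is mechanical but enlarges the casework: in the $W(n)=1$ branch you must carry $U_{\text{lose}}(n)=\max_a\bigl\{U_{\text{win}}(n-a)\ \text{if}\ W(n-a)=1,\ U_{\text{lose}}(n-a)\ \text{if}\ W(n-a)=2\bigr\}$, which forces you to consult the winner's closed form at the ``bad-move'' residues $i-1$, $i-L$, $i-L-1$ as well as the loser's. (Your $W(n)=2$ branch is unaffected: there every move lands on a position with $W(n-a)=1$, so $U_1(n-a)=U_{\text{win}}(n-a)$ uniformly, exactly as you wrote.) The rest of the plan --- base cases through $n=L+1$, the wrap-around bookkeeping when $a\in\{L,L+1\}$ crosses a block boundary, and the per-period increment check of $\tfrac{3L}{2}$ (resp.\ $\tfrac{3L+1}{2}$) --- is sound once this one recurrence is repaired.
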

The following lemma is very useful and easy to verify.
\begin{lemma}\label{lem:upperclasswinner}If $W(n)=1,$ then $W(n;U(n),\infty)=1.$ If $W(n)=2,$ then $W(n;\infty,U(n))=2.$ \end{lemma}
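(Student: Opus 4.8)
The plan is to prove, by strong induction on $n$, a slightly stronger monotone statement that treats the two players' roles uniformly. Recall that $W(n)=1$ means the player to move wins and $W(n)=2$ means the player to move loses; accordingly $U_1(n)$ is the threshold for the player about to move and $U_2(n)$ for the waiting player, so that the winner's threshold is $U_1(n)$ when $W(n)=1$ and $U_2(n)$ when $W(n)=2$. I would prove: for every $n$, if $W(n)=1$ then $W(n;d,\infty)=1$ for all $d\ge U_1(n)$, and if $W(n)=2$ then $W(n;\infty,e)=2$ for all $e\ge U_2(n)$. The lemma is the special case $d=U_1(n)$ (resp.\ $e=U_2(n)$). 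Carrying the inequality ``$\ge U$'' rather than the exact value is what makes the induction close, since the waiting-player threshold is defined by a maximum and the winner will often inherit strictly more money than the successor position requires.

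For the inductive step when $W(n)=1$, the winning player is to move with $d\ge U_1(n)$ dollars against a rich opponent. By Definition~\ref{def:upper} there is a move $a^*\in A$ with $W(n-a^*)=2$ attaining $U_1(n)=U_2(n-a^*)+a^*$; since $U_2\ge 0$ we have $a^*\le U_1(n)\le d$, so the winner can afford it. After playing $a^*$ the winner is the waiting player in the position $n-a^*$ with $d-a^*\ge U_2(n-a^*)$ dollars, the opponent (still rich) is to move, and $W(n-a^*)=2$; the induction hypothesis then gives the waiting player a win. For the step when $W(n)=2$, the winning player is waiting with $e\ge U_2(n)$ against a rich opponent who is to move. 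The opponent can afford any legal move, so I must handle every $a\in A$ with $a\le n$; for each such $a$ we have $W(n-a)=1$ (every successor of a losing position is a winning position), and after the opponent plays $a$ the winner becomes the player to move in position $n-a$ with $e\ge U_2(n)=\max_{a'}U_1(n-a')\ge U_1(n-a)$ dollars against a rich opponent, so the induction hypothesis again gives a win. The base case is $n<\min(A)$, where the player to move cannot move and loses, and the waiting player wins with $U_2(n)=0$ dollars.

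The argument is essentially bookkeeping, and the one point needing care---the \emph{main obstacle} in the sense of where a naive attempt breaks---is matching the defining extrema of $U_1$ and $U_2$ to the two players' freedoms. The winner controls its own moves, so its threshold is a minimum over winning moves and it suffices to exhibit the single affordable move $a^*$; the rich loser controls the opposing moves, so the winner's threshold must be a maximum, which forces the ``$\ge$'' formulation so that the surplus money left after the loser's worst choice still meets the successor requirement. It also matters that the rich opponent can never be eliminated by the money rule, so the only way the game ends is by the stone rule exactly as in ordinary NIM; this is why following the normal strategy, which the threshold guarantees is always affordable, produces a genuine win rather than a stalemate.
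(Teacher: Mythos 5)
Your proof is correct: the strengthened monotone statement ($d\ge U_1(n)$ when $W(n)=1$, $e\ge U_2(n)$ when $W(n)=2$) is exactly what is needed to close the strong induction on $n$, with the minimum in the definition of $U_1$ supplying a single affordable winning move $a^*$ and the maximum in the definition of $U_2$ absorbing whichever move the rich losing player makes. The paper gives no proof of this lemma at all (it is declared ``easy to verify''), and your induction via Definition~\ref{def:upper} is precisely the argument it implicitly relies on, in the same style as its sketch of the upper-class win conditions.
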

\section{What is Middle Class?}
\noindent How much money does a player have if he has less than $U$ dollars, but enough money to last the whole game? We will rigorously define this quantity for this section.
\begin{definition}\label{def:middle} We say that Player $1$ is \textit{middle class} if both of the following hold:
\begin{itemize}
    \item Player 1 is not upper class.
    \item Player 1 will be able to finish the game no matter what Player 2 does if he takes $\min(A)$ the entire game.
\end{itemize}
Similar for Player $2.$
\end{definition}
\begin{definition} Let $A \subseteq^{\mathrm{fin}} \mathbb{N}.$ Define a function $M:\mathbb{N} \rightarrow \mathbb{N}$ that maps the number of stones in the pile to the minimum amount of money a player needs to be middle class. Let $M_1(n)$ and $M_2(n)$ denote these amounts for Players 1 and 2, respectively.
\end{definition}
The following theorem follows from our definition of middle class. and can be applied to our sets $A=\{1,L\}$ and $A=\{1,L,L+1\}$ ($L$ can be even or odd). 
\begin{theorem}
Let $A=\{1,L\}$ or $\{1,L,L+1\}.$ Then,
\begin{itemize}
    \item If $n\equiv 0\pmod2:$
    \begin{itemize}
        \item $M_1(n)=\dfrac{n}{2}+1$ and $M_2(n)=\dfrac{n}{2}.$
    \end{itemize}
    \item If $n\equiv 1\pmod2:$
    \begin{itemize}
        \item $M_1(n)=M_2(n)=\dfrac{n+1}{2}.$
    \end{itemize}
\end{itemize}
\end{theorem}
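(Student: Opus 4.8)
The plan is to prove the two closed-form identities by strong induction on $n$, after first distilling Definition~\ref{def:middle} into a simple recursion. The central observation is that ``middle class'' only ever involves a player removing $\min(A)=1$ stone per turn, so the larger elements of $A$ (namely $L$ and $L+1$) enter only through the \emph{opponent's} choices; I will show they never matter, which is exactly why one formula, with no dependence on $L$, covers both $A=\{1,L\}$ and $A=\{1,L,L+1\}$. It is convenient to record the statement in the unified form $M_1(n)=\lfloor n/2\rfloor+1$ and $M_2(n)=\lceil n/2\rceil$, which is equivalent to the even/odd split given in the theorem.

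First I would set up the recursion. Suppose Player $1$ is the mover and plays miserly. His first move removes one stone and costs one dollar, leaving $n-1$ stones with Player $2$ to move; in that residual game Player $1$ is now the \emph{second} player, so by definition he needs $M_2(n-1)$ further dollars to stay middle class. This gives $M_1(n)=M_2(n-1)+1$. For the second player the roles are reversed: Player $1$ (the opponent) moves first and removes some $a\in A$, after which Player $2$ becomes the mover in a pile of $n-a$ stones and needs $M_1(n-a)$ dollars. Since the phrase ``no matter what the opponent does'' forces a worst case, and since the inductive hypothesis makes $M_1$ nondecreasing, the opponent's most damaging choice is the smallest, $a=\min(A)=1$; every larger $a$ only shortens the game and lowers the requirement. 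Hence $M_2(n)=M_1(n-1)$. Substituting one identity into the other yields the clean two-step recurrences $M_1(n)=M_1(n-2)+1$ and $M_2(n)=M_2(n-2)+1$.

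Next I would fix the base values that seed the recursion, and this is where the asymmetric ``$+1$'' between the two formulas is born. When the pile is empty the mover cannot move and loses; the convention forced by Definition~\ref{def:middle} is that he must still be able to \emph{afford} $\min(A)$ on that final turn, so his loss is charged to the empty pile rather than to a shortage of money. This makes the mover at $n=0$ require one dollar while the non-mover requires none, i.e. $M_1(0)=1$ and $M_2(0)=0$ (with $M_1(1)=M_2(1)=1$ checked by hand). Feeding these into the two-step recurrences and separating the parity of $n$ reproduces $M_1(n)=\lfloor n/2\rfloor+1$ and $M_2(n)=\lceil n/2\rceil$, which is exactly the claimed statement.

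The routine part is unwinding the recurrences and matching floors and ceilings to the even/odd cases. The main obstacle is the worst-case step for the second player: I must argue rigorously that the universal quantifier ``for all opponent strategies'' reduces to the opponent removing $\min(A)$ on every move. This rests on showing that $M_1$ is nondecreasing, so that the largest residual pile $n-1$ dominates every $n-a$ with $a\in A$; the very same monotonicity is what lets me discard the extra options $L$ and $L+1$, making the result literally identical for the two sets $A$. A secondary subtlety is pinning down the empty-pile base-case convention precisely, since it is solely responsible for Player $1$ needing one more dollar than Player $2$ when $n$ is even, and for the reverse asymmetry when $n$ is odd.
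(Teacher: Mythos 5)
Your argument is sound and, notably, more than the paper itself offers: the paper states this theorem with only the remark that it ``follows from our definition of middle class'' and gives no proof, so there is nothing to match step for step. Your route --- the mutual recursion $M_1(n)=M_2(n-1)+1$ and $M_2(n)=\max_{a\in A,\,a\le n} M_1(n-a)=M_1(n-1)$, with the worst case justified by monotonicity of $M_1$ --- is a correct formalization of the counting argument the authors evidently have in mind (under miserly-vs-miserly play the mover pays on $\lceil n/2\rceil$ turns and the non-mover on $\lfloor n/2\rfloor$ turns, and the longest game is the one where the opponent also always takes $1$, which is why $L$ and $L+1$ never appear in the answer). Your recurrences and base values do reproduce $M_1(n)=\lfloor n/2\rfloor+1$, $M_2(n)=\lceil n/2\rceil$, which agrees with the theorem in both parities. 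The one point you are right to flag, and should state as an explicit convention rather than as something ``forced'' by Definition~\ref{def:middle}, is the seed $M_1(0)=1$, $M_2(0)=0$: as written, the definition does not unambiguously say whether a player facing an empty pile with $0$ dollars has ``finished the game'' (indeed its first clause is awkward here, since $U(0)=0$ makes every player upper class at $n=0$), and the alternative reading $M_1(0)=0$ would propagate to $M_1(n)=n/2$ for even $n$, contradicting the theorem. Choosing the convention that the losing player must still hold $\min(A)$ dollars when stranded at the empty pile is exactly what makes the stated formulas (and their later use in the staircase theorems) come out right, so your proof is correct once that convention is declared.
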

\section{What is Lower Class?}
It is possible for players to have only enough money to play miserly. First, some more definitions.
\par\noindent
\begin{definition}\label{def:lower} We say a player is \textit{lower class} if
\begin{itemize}
\item he is not middle class.
\item he goes broke if he does not take $\min(A)$ each turn.
\end{itemize}
\end{definition}
We will simply set the lower class money amount for Player $1$ as every nonnegative integer less than $M_1(n)$ and similarly, every nonnegative integer less than $M_2(n)$ for Player $2.$ The following theorem, which is intuitively obvious, makes this more rigorous.
\begin{theorem}
Let the variable $P_1\in\mathbb{N},$ describe the set of all possible values for which Player 1 is poor, and similarly for Player 2. Then, we have the inequalities
\begin{center}
    $0\leq P_1<M_1(n)$
\end{center}
and
\begin{center}
    $0\leq P_2<M_2(n).$
\end{center}
\end{theorem}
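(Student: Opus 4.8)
The plan is to show that, for a fixed number of stones $n$ and a fixed player, the three wealth classes partition that player's possible money values into the nested intervals $[0,M(n))$, $[M(n),U(n))$, and $[U(n),\infty)$, and then to identify the lower class with the first of these. Since the argument is identical for both players (only the thresholds $M_1,U_1$ versus $M_2,U_2$ change), it suffices to prove the single statement: a player is lower class if and only if their money $v$ satisfies $0\le v<M(n)$. This immediately yields the stated inequalities $0\le P_1<M_1(n)$ and $0\le P_2<M_2(n)$.

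First I would record the ordering $M(n)\le U(n)$, which follows by comparing, in each residue class of $n$, the explicit value of $M(n)$ supplied by the middle-class theorem with the appropriate upper-class value $U_{\text{win}}(n)$ or $U_{\text{lose}}(n)$ from the upper-class theorems. With $M(n)\le U(n)$ in hand, every money value $v$ falls into exactly one of three cases, which I would treat in turn. If $v\ge U(n)$, then by Lemma~\ref{lem:upperclasswinner} and the definition of $U$ the player is upper class and can carry out the normal strategy to completion; the normal strategy is not miserly yet does not bankrupt them, so the second clause of Definition~\ref{def:lower} fails and the player is not lower class. If $M(n)\le v<U(n)$, then by Definition~\ref{def:middle} the player is middle class, so the first clause of Definition~\ref{def:lower} fails and again the player is not lower class.

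The remaining case $v<M(n)$ is where the real content lies. Here the first clause of Definition~\ref{def:lower} (not middle class) is immediate, since $M(n)$ is by definition the least amount of money making the player middle class. For the second clause I would argue that $M(n)$ is precisely the least amount of money with which the player can complete a miserly game against every opponent strategy; the ``not upper class'' qualifier built into middle class is automatic once $v<M(n)\le U(n)$. Consequently a player with $v<M(n)$ cannot finish even the cheapest, miserly, line of play, and any deviation to a costlier move removes more stones per move and so exhausts the budget at least as fast. Hence the player goes broke whenever they fail to take $\min(A)$ every turn, verifying the second clause, so the player is lower class. Combining the three cases shows that the lower class is exactly the set $\{0,1,\dots,M(n)-1\}$.

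I expect the main obstacle to be the second case of the split rather than any computation: ``not middle class'' alone does not characterize lower class, because upper-class players are also not middle class, so the proof must genuinely invoke the bankruptcy clause of Definition~\ref{def:lower} (together with Lemma~\ref{lem:upperclasswinner}) to separate the rich from the poor. A secondary subtlety is that when $v<M(n)$ the player goes broke even under miserly play, so I must confirm the clause ``goes broke if he does not take $\min(A)$ each turn'' is still satisfied; this holds because any non-miserly deviation spends money no slower than the already-failing miserly line, so bankruptcy ensues a fortiori.
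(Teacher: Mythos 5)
Your proposal is correct in substance, but it is worth noting that the paper itself offers no proof of this statement at all: the theorem is introduced with the remark that it is ``intuitively obvious,'' and the lower class is essentially being \emph{defined} as the complement of the middle and upper classes below $M(n)$. Your trichotomy argument --- establish $M(n)\leq U(n)$ from the explicit formulas, split the money value $v$ into $[0,M(n))$, $[M(n),U(n))$, and $[U(n),\infty)$, and check both clauses of Definition~\ref{def:lower} in each regime --- is therefore strictly more than the paper supplies, and it correctly isolates the two points that actually need an argument: that an upper-class player is excluded from the lower class only by the bankruptcy clause (since ``not middle class'' is satisfied by the rich as well as the poor), and that a player with $v<M(n)$ satisfies the bankruptcy clause a fortiori because, against a miserly opponent, any non-miserly deviation forces the player to remove (and hence pay for) at least as many stones as the miserly line does. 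Two small caveats: the inequality $M(n)\leq U(n)$ is only verified from the formulas for the specific sets $\{1,L\}$ and $\{1,L,L+1\}$, which is all the paper needs but should be stated as such; and in the case $v\geq U(n)$ your appeal to Lemma~\ref{lem:upperclasswinner} only covers the player who wins the normal game --- for the losing player you should instead observe directly that $v\geq U(n)\geq M(n)$ already lets him survive the whole game, so the bankruptcy clause fails. Neither caveat affects the validity of the argument.
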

\section{Win Conditions}
Here, we will employ our definitions to present win conditions for NIM with Cash for the sets $A=\{1,L\}$ and $A=\{1,L,L+1\}.$
\par\noindent
The following two theorems can be applied to both of our sets. 
\par\noindent
\begin{theorem}
Let $A \subseteq^{\mathrm{fin}} \mathbb{N}.$ If at least one player is upper class, then

\begin{enumerate}
    \item If $d \geq U_1(n)$ and $e<U_2(n),$ then $W(n;d,e)=1.$
    \item If $d<U_1(n)$ and $e \geq U_2(n),$ then $W(n;d,e)=2.$
    \item If $d \geq U_1(n)$ and $e \geq U_2(n),$ then $W(n;d,e)=W(n).$
\end{enumerate}
\end{theorem}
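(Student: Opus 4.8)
The plan is to prove each of the three cases by exhibiting an explicit strategy for the claimed winner and verifying it survives for the whole game, leaning on Lemma~\ref{lem:upperclasswinner} and the characterization of $U$ as the exact cash needed to sustain normal play. The key observation I would isolate first is a monotonicity fact: having more money can never hurt a player, i.e.\ if a player wins with cash $c$, they win with any cash $c' \ge c$ (an opponent's extra money, conversely, can only help the opponent). This is intuitively clear but worth stating, since it lets me reduce the finite-cash cases to the infinite-cash statements of Lemma~\ref{lem:upperclasswinner}.

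For case~(1), where $d \ge U_1(n)$ and $e < U_2(n)$, I would argue that Player~1 plays the normal-NIM winning strategy if $W(n)=1$, and the normal losing-position optimal strategy (taking the maximum total stones, as in the reformulation of $U_1$ noted after Definition~\ref{def:upper}) if $W(n)=2$. Having $d \ge U_1(n)$ is by construction exactly enough cash to execute this play to the end without going broke. Meanwhile $e < U_2(n)$ means Player~2 has strictly less than the minimum cash required to sustain their own normal play, so at some point Player~2 is forced to deviate (take fewer stones than normal or be unable to move); I would show this deviation cannot rescue Player~2, because the cash-shortfall forces Player~2 to run out before the pile does. Case~(2) is the mirror image with the roles of the two players swapped, and I would simply remark that it follows by the same argument with $1 \leftrightarrow 2$.

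For case~(3), where both $d \ge U_1(n)$ and $e \ge U_2(n)$, the claim is $W(n;d,e) = W(n)$, i.e.\ cash is irrelevant because both players can afford normal play. Here I would invoke monotonicity together with Lemma~\ref{lem:upperclasswinner}: the normal-NIM winner (say Player~1 when $W(n)=1$) has $d \ge U_1(n)$, so by the lemma and monotonicity Player~1 wins even against a cash-unlimited opponent, hence wins here; and since the normal loser has enough cash too, no cash-driven upset occurs. The symmetric subcase $W(n)=2$ is identical.

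The main obstacle I anticipate is the deviation argument in case~(1): making precise that once the poor opponent is forced off normal play, the winner's strategy still closes out the game. The cleanest route is probably induction on $n$ using the recursive structure of Definition~\ref{def:upper}, tracking the invariant ``the winner's current cash is at least $U$ of the current position and the loser's cash is strictly below the corresponding $U$,'' and checking this invariant is preserved by one round of play in each residue case. I would formalize that the defining recurrence for $U$ is exactly what makes this invariant self-propagating, so the induction goes through uniformly without case analysis on the specific sets $A$.
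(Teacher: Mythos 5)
Your overall framework --- induction on $n$ driven by the recurrence in Definition~\ref{def:upper}, Lemma~\ref{lem:upperclasswinner} for the cash-unlimited baseline, plus an explicit monotonicity lemma (more cash never hurts you, and an opponent's extra cash never helps you) --- matches the paper's, and the monotonicity observation cleanly dispatches case~(3) as well as the subcases of (1) and (2) in which the cash-rich player is also the normal-NIM winner.

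There is, however, a genuine error in the one subcase where the theorem has real content: case~(1) with $W(n)=2$ (and symmetrically case~(2) with $W(n)=1$), i.e.\ the normal-NIM \emph{loser} is upper class while the normal-NIM winner is not. You prescribe that Player~1 ``take the maximum total stones,'' reading the reformulation of $U_1(n)$ after Definition~\ref{def:upper} as a strategy. It is not a strategy; it is a worst-case cash bound --- the most Player~1 could be forced to spend over any normal line of play, hence the budget needed to survive all of them. The winning strategy is the opposite: Player~1 plays \emph{miserly}, taking $\min(A)$ every turn, so that Player~2, in order to keep restoring the normal-NIM invariant, is forced to make (and pay for) the large moves. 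For instance with $A=\{1,L\}$ and $n\equiv 0\pmod{L+1}$: Player~1 takes $1$ and Player~2's only normal reply is to take $L$, so each round costs Player~1 one dollar and Player~2 $L$ dollars --- this is exactly what makes the hypothesis $e<U_2(n)$ eventually bite. Under your max-stones strategy the expenditures are reversed: Player~1 burns through $d$ while Player~2 spends as little as possible, so Player~2 may never be forced to deviate and simply wins normally; your assertion that ``the cash-shortfall forces Player~2 to run out before the pile does'' fails. Your proposed invariant (winner's cash at least the current $U$, loser's cash strictly below it) is the right one to carry through the induction, but it is preserved only under the miserly move, which is precisely the strategy the paper's proof uses.
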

\begin{proof}
We will prove all parts using an induction on $n,$ Definition \ref{def:upper}, Lemma \ref{lem:upperclasswinner}, and a key intuition.
\begin{enumerate}
    \item We first give the intuition. Observe that if $W(n)=2,$ Player 2 has less than $U_1(n)$ dollars, and Player 1 has at least $U_2(n)$ dollars, Player $2$ will go broke if he never switches his strategy from normal. Thus, Player $1$ can take $\min{(A)}$ after each time Player $2$ plays a normal move; when Player $2$ decides to switch strategies, Player $1$ is then in a winning position and has enough money to win. Formally, the proof uses Definition \ref{def:upper} and an induction on $n.$ 
    \item The proof has the same intuition and formality as $(1).$
    \item Intuitively this holds, as upper class for the winning player means that he will be able to play normally and win the game.
\end{enumerate}\end{proof}
\begin{theorem} Let $A \subseteq^{\mathrm{fin}} \mathbb{N}.$ If at least one player is lower class, then
\begin{enumerate}
    \item If $d \geq M_1(n)$ and $e<M_2(n),$ then $W(n;d,e)=1.$ \label{thm6.2:part1} 
    \item If $d<M_1(n)$ and $e \geq M_2(n),$ then $W(n;d,e)=2.$
    \item If $d=e<M_1(n),$ then $W(n;d,e)=2.$ 
    \item If $e<d<M_1(n),$ then $W(n;d,e)=1.$ \label{thm6.2:part4}
    \item If $d<e<M_2(n),$ then $W(n;d,e)=2.$
\end{enumerate}
\end{theorem}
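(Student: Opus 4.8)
The plan is to collapse every case to a single ``miserly race'' in which both players take $\min(A)=1$ on each turn, and then read the winner off a short parity-and-money count. The engine of the reduction is Definition~\ref{def:lower} itself: if a player is lower class then any deviation from taking $\min(A)$ sends that player broke before the game ends, and being broke means being unable to move, i.e.\ losing. Hence a lower-class player can never profit from deviating---against any opponent line it either plays miserly for the whole game or deviates and loses outright. So in each of the five parts I may assume that every lower-class player plays miserly; and, crucially, whenever the designated \emph{winner} is itself lower class (as in part~\ref{thm6.2:part4}) I will simply exhibit its miserly line as a winning strategy instead of analyzing deviations. The potential deviator is always the richer (middle-or-upper-class) player, who is also the designated winner, and who wins already by playing miserly, so no complicated deviation ever needs to be examined.

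Next I would pin down the race exactly. If both players take one stone per turn, Player~$1$ can afford the moves on turns $1,3,\dots,2d-1$ and is first blocked for lack of money on turn $2d+1$; Player~$2$ can afford turns $2,4,\dots,2e$ and is first blocked on turn $2e+2$; and the pile, exhausted after $n$ moves, blocks turn $n+1$. The first turn whose scheduled move cannot be made is therefore
\[
t^\ast=\min\bigl(2d+1,\ 2e+2,\ n+1\bigr),
\]
and the player to move on turn $t^\ast$ (Player~$1$ if $t^\ast$ is odd, Player~$2$ if it is even) is the loser. This one formula simultaneously encodes ``who runs out of money first'' and the parity of ``who takes the last stone,'' and I would justify it by a direct count (or, to match the inductive style used for the upper-class win conditions, a short induction on $n$).

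It then remains to feed each part's hypotheses into $t^\ast$, using the explicit values from the theorem computing $M_1,M_2$ (namely $M_1=\tfrac{n}{2}+1,\ M_2=\tfrac{n}{2}$ for $n$ even, and $M_1=M_2=\tfrac{n+1}{2}$ for $n$ odd). The recurring check is which of $2d+1,2e+2$ is smaller and that it is at most $n+1$, so that money---not the pile---decides the race. For instance, in part~\ref{thm6.2:part1} the bounds $d\ge M_1$ and $e<M_2$ give $2e+2\le n+1<2d+1$, so $t^\ast=2e+2$ is even and Player~$1$ wins; the same conclusion holds in part~\ref{thm6.2:part4}, where $e<d<M_1$ yields $2e+2<2d+1$ together with $2e+2\le n+1$. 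In the tie case $d=e<M_1$ one gets $2d+1<2d+2=2e+2$ with $2d+1\le n+1$, so $t^\ast=2d+1$ is odd and Player~$2$ wins---which is precisely why equal money favors the second player. Parts~2 and~5 are symmetric inequalities of the same type, and along the way I would confirm that each hypothesis is consistent with ``at least one player is lower class'' (e.g.\ $M_2\le M_1$ makes $d<e<M_2$ force $d<M_1$).

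The hard part will not be the arithmetic but nailing the reduction at the boundary. Two points need care. First, the hypotheses occasionally place the winner exactly at $M_2$, so that player is middle, not lower, class---for example $d=e=\tfrac{n}{2}$ with $n$ even in part~3---and there I must verify that the middle-class player's own miserly line is affordable (it is, since $M_2$ is by definition the least amount sufficient to finish miserly) and realizes the outcome predicted by $t^\ast$, rather than leaning on a lower-class constraint that does not apply to that player. Second, when the money-runout turn coincides with the pile-runout turn ($2d+1=n+1$ or $2e+2=n+1$), the mover simultaneously has no money and faces an empty pile; they lose either way, but the write-up must say this explicitly so that the parity bookkeeping inside $t^\ast$ is airtight. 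Once the reduction lemma and the $t^\ast$ formula are established with these two boundary situations settled, all five parts follow by direct substitution.
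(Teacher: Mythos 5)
Your plan is the one the paper itself uses --- collapse everything to the miserly race and decide by who is first unable to move --- except that the paper's entire argument is two sentences asserting which player ``will go broke first'' in each case, so your explicit $t^\ast=\min(2d+1,\,2e+2,\,n+1)$ bookkeeping and the case-by-case substitution of $M_1(n),M_2(n)$ are a genuine gain in precision, and the arithmetic in all five parts checks out.

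The one genuine gap is in your reduction lemma, and it is a gap the paper's own proof shares. You argue: lower class $\Rightarrow$ any deviation sends the player broke before the game ends $\Rightarrow$ he cannot move $\Rightarrow$ he loses. But being broke only costs a player the game if his turn comes up again while the game is still in progress; a deviating loser might instead try to spend his money \emph{faster}, hoping to exhaust the pile on the \emph{winner's} turn and win while broke. The phrase ``goes broke before the game ends'' silently assumes the game does not end first, so the appeal to the definition of lower class is circular at exactly this point. The fix is a counting fact your $t^\ast$ setup almost contains but never states: a player holding $c$ dollars removes at most $c$ stones over the entire game, however he plays. Under each part's hypotheses the designated loser's cash $c$ satisfies $c<M_2(n)=\lceil n/2\rceil$ (or $c\le d-1$ with $d<M_1(n)$ in part (4)), so his total removal capacity plus the winner's one-stone-per-turn removals cannot reach $n$ strictly before the turn on which he is already stuck; hence the pile can never run out on the winner's turn, and ``broke'' really does entail ``loses.'' (For part (3) with $d=e$ one can instead have Player 2 copy Player 1's moves and maintain the invariant that Player 1's cash never exceeds half the remaining pile, which is precisely why $M_1(n)=\tfrac n2+1$ rather than $\tfrac n2$.) Relatedly, your side remark that ``the potential deviator is always the richer, middle-or-upper-class player'' is false: in parts (2), (3) and (5) the designated loser is the lower-class player, and it is exactly his deviations that must be excluded, so the whole burden of the proof rests on the step above.
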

\begin{proof} Applying definitions \ref{def:middle} and \ref{def:lower}, we see that Player $2$ will go broke first in (\ref{thm6.2:part1}) and (\ref{thm6.2:part4}), and that Player $1$ will go broke first in the remaining two cases. The result follows. \end{proof}
\par
The following three theorems describe win conditions for when both players are middle class, and can also be referred to as the staircase theorems. Once we are finished presenting the theorems and proofs, we will provide diagrams to explain this name. Recall for these theorems that $M_1(n)=\dfrac{n}{2}+1, M_2(n)=\dfrac{n}{2}$ for even $n$ and $M_1(n)=M_2(n)=\dfrac{n+1}{2}$ for odd $n.$
\begin{theorem}[Staircase Theorem for $\{1,2\}$]
If $A=\{1,2\},$ we can determine who wins using the following statements.
\begin{enumerate}
    \item $n\equiv 0\pmod{6}:$
    \begin{itemize}
        \item If $d<M_1(n)+1$ and $e<M_2(n)+1,$ then $W(n;d,e)=1.$
        \item If $d<M_1(n)+1$ and $e\geq M_2(n)+1,$ then $W(n;d,e)=2.$
        \item If $M_1(n)+1\leq d<M_1(n)+2$ and $e<M_2(n)+2,$ then $W(n;d,e)=1.$
        \item If $M_1(n)+1\leq d<M_1(n)+2$ and $e\geq M_2(n)+2,$ then $W(n;d,e)=2.$
        \item If $M_1(n)+2\leq d<M_1(n)+3$ and $e<M_2(n)+3,$ then $W(n;d,e)=1.$
        \item If $M_1(n)+2\leq d<M_1(n)+3$ and $e\geq M_2(n)+3,$ then $W(n;d,e)=2.$
        \\
        \\
        $\vdots$
        \\
        \item If $M_1(n)+k\leq d<M_1(n)+k+1$ and $e<M_2(n)+k+1,$ then $W(n;d,e)=1.$
        \item If $M_1(n)+k\leq d<M_1(n)+k+1$ and $e\geq M_2(n)+k+1,$ then $W(n;d,e)=2.$
        \\
        \\
        $\vdots$
        \\
        \item If $d\geq U_1(n)-1$ and $e<U_2(n)-1,$ then $W(n;d,e)=1.$
        \item If $d\geq U_1(n)-1$ and $e\geq U_2(n)-1,$ then $W(n;d,e)=2.$
    \end{itemize}
    \item $n\equiv 1,5\pmod{6}:$
    \begin{itemize}
        \item If $d<M_1(n)+1,$ then $W(n;d,e)=2.$
        \item If $M_1(n)+1\leq d<M_1(n)+2$ and $e<M_2(n)+1,$ then $W(n;d,e)=1.$
        \item If $M_1(n)+1\leq d<M_1(n)+2$ and $e\geq M_2(n)+1,$ then $W(n;d,e)=2.$
        \item If $M_1(n)+2\leq d<M_1(n)+3$ and $e<M_2(n)+2,$ then $W(n;d,e)=1.$
        \item If $M_1(n)+2\leq d<M_1(n)+3$ and $e\geq M_2(n)+2,$ then $W(n;d,e)=2.$
        \item If $M_1(n)+3\leq d<M_1(n)+4$ and $e<M_2(n)+3,$ then $W(n;d,e)=1.$
        \item If $M_1(n)+3\leq d<M_1(n)+4$ and $e\geq M_2(n)+3,$ then $W(n;d,e)=2.$
        \\
        \\
        $\vdots$
        \\
        \item If $M_1(n)+k+1\leq d<M_1(n)+k+2$ and $e<M_2(n)+k+1,$ then $W(n;d,e)=1.$
        \item If $M_1(n)+k+1\leq d<M_1(n)+k+2$ and $e\geq M_2(n)+k+1,$ then $W(n;d,e)=2.$
        \\
        \\
        $\vdots$
        \\
        \item If $d\geq U_1(n)-1,$ then $W(n;d,e)=1.$
    \end{itemize}
    \item $n\equiv 2,4\pmod{6}:$
    \begin{itemize}
        \item If $d<M_1(n)+1$ and $e<M_2(n)+1,$ then $W(n;d,e)=1.$
        \item If $d<M_1(n)+1$ and $e\geq M_2(n)+1,$ then $W(n;d,e)=2.$
        \item If $M_1(n)+1\leq d<M_1(n)+2$ and $e<M_2(n)+2,$ then $W(n;d,e)=1.$
        \item If $M_1(n)+1\leq d<M_1(n)+2$ and $e\geq M_2(n)+2,$ then $W(n;d,e)=2.$
        \item If $M_1(n)+2\leq d<M_1(n)+3$ and $e<M_2(n)+3,$ then $W(n;d,e)=1.$
        \item If $M_1(n)+2\leq d<M_1(n)+3$ and $e\geq M_2(n)+3,$ then $W(n;d,e)=2.$
        \\
        \\
        $\vdots$
        \\
        \item If $M_1(n)+k\leq d<M_1(n)+k+1$ and $e<M_2(n)+k+1,$ then $W(n;d,e)=1.$
        \item If $M_1(n)+k\leq d<M_1(n)+k+1$ and $e\geq M_2(n)+k+1,$ then $W(n;d,e)=2.$
        \\
        \\
        $\vdots$
        \\
        \item If $d\geq U_1(n)-1,$ then $W(n;d,e)=1.$
    \end{itemize}
    \item $n\equiv 3\pmod{6}:$
    \begin{itemize}
        \item If $d<M_1(n)+1,$ then $W(n;d,e)=2.$
        \item If $M_1(n)+1\leq d<M_1(n)+2$ and $e<M_2(n)+1,$ then $W(n;d,e)=1.$
        \item If $M_1(n)+1\leq d<M_1(n)+2$ and $e\geq M_2(n)+1,$ then $W(n;d,e)=2.$
        \item If $M_1(n)+2\leq d<M_1(n)+3$ and $e<M_2(n)+2,$ then $W(n;d,e)=1.$
        \item If $M_1(n)+2\leq d<M_1(n)+3$ and $e\geq M_2(n)+2,$ then $W(n;d,e)=2.$
        \item If $M_1(n)+3\leq d<M_1(n)+4$ and $e<M_2(n)+3,$ then $W(n;d,e)=1.$
        \item If $M_1(n)+3\leq d<M_1(n)+4$ and $e\geq M_2(n)+3,$ then $W(n;d,e)=2.$
        \\
        \\
        $\vdots$
        \\
        \item If $M_1(n)+k+1\leq d<M_1(n)+k+2$ and $e<M_2(n)+k+1,$ then $W(n;d,e)=1.$
        \item If $M_1(n)+k+1\leq d<M_1(n)+k+2$ and $e\geq M_2(n)+k+1,$ then $W(n;d,e)=2.$
        \\
        \\
        $\vdots$
        \\
        \item If $d\geq U_1(n)-1$ and $e<U_2(n)-1,$ then $W(n;d,e)=1.$
        \item If $d\geq U_1(n)-1$ and $e\geq U_2(n)-1,$ then $W(n;d,e)=2.$
    \end{itemize}
\end{enumerate}
\end{theorem}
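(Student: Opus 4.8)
The plan is to first collapse the elaborate four-case staircase into a single invariant and then prove that invariant by induction on $n$. Comparing the listed conditions shows that, once both players are middle class (so $M_1(n)\le d<U_1(n)$ and $M_2(n)\le e<U_2(n)$), every case reduces to the same rule: $W(n;d,e)=1$ if and only if $e<d$. Indeed, in each residue class the generic step of the staircase asserts $W=1$ exactly when $e<d$: for even $n$ (the $n\equiv 0$ and $n\equiv 2,4$ cases) the step $d=M_1(n)+k$ carries threshold $e<M_2(n)+k+1=d$, while for odd $n$ (the $n\equiv 1,5$ and $n\equiv 3$ cases) the step $d=M_1(n)+k+1$ carries threshold $e<M_2(n)+k+1=d$, using $M_1(n)-M_2(n)=1$ and $M_1(n)-M_2(n)=0$ respectively. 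The top step agrees: when $W(n)=2$ one has $U_1(n)=U_2(n)$, so ``$e<U_2(n)-1$'' is precisely ``$e<d$'' at $d=U_1(n)-1$; when $W(n)=1$ one has $U_2(n)<U_1(n)$, so $e\le U_2(n)-1<U_1(n)-1=d$ forces $e<d$ automatically and Player $1$ always wins at the top. Hence it suffices to prove the single statement $W(n;d,e)=1\iff e<d$ throughout the middle-class regime.

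For the induction I would exploit the symmetry of the recursion. For a position with $k$ stones, write $X$ for the cash of the player to move and $Y$ for the cash of the other player, and call the position a first-mover or second-mover win. The recursion reads: the mover wins iff some $a\in\{1,2\}$ with $a\le k$ and $a\le X$ leaves a position $(k-a;\,Y,\,X-a)$ that is a second-mover win. Assuming inductively that for all $k'<k$ in the middle-class regime a position is a first-mover win iff the opponent has strictly less cash, the position $(k-a;Y,X-a)$ is a second-mover win iff $X-a\ge Y$. Thus the mover wins iff $X-a\ge Y$ for some $a$, i.e. iff $X\ge Y+1$ (take $a=1$, which is legal and affordable since $X\ge M_1(k)\ge 1$), i.e. iff $Y<X$. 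Translating back gives $W(k;d,e)=1\iff e<d$, the desired invariant; symmetrically, when $X\le Y$ both $a=1$ and $a=2$ leave $X-a<Y$, so the mover loses.

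The real work lies in the boundary steps, where the reduced position $(k-a;Y,X-a)$ leaves the middle-class regime and the induction hypothesis must be replaced by the already-proved upper- and lower-class theorems; this is the source of the split modulo $6$, and I expect it to be the main obstacle. At the bottom of the staircase $X$ is near $M_1(k)$, so the move $a=2$ drives the mover's own cash $X-2$ down to (or below) $M_2(k-2)$, making that player lower class in the subgame; here I would invoke the lower-class theorem to confirm that $a=2$ never rescues a mover with $X\le Y$. At the top $X=U_1(k)-1$ with $Y$ near $U_2(k)$, so after a move the opponent becomes upper class and the upper-class theorem applies; this is exactly where $W(k)$ enters, yielding the two-condition top step when $W(k)=2$ (then $U_1(k)=U_2(k)$ and the opponent can still be upper class, so Player $2$ survives) versus the one-condition top step when $W(k)=1$. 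The parity of $k$ fixes $M_1(k)-M_2(k)\in\{0,1\}$ (and hence whether the bottom step degenerates), while $W(k)$ governs the top interface, and the two binary choices produce precisely the residues $0$, $3$, $\{1,5\}$, $\{2,4\}$ modulo $6$. Each boundary verification amounts to checking that the upper-/lower-class formulas dovetail exactly with the line $e=d$ at the ends of the middle range; the small-$n$ base cases, where the middle-class band is narrow or empty, I would dispatch directly before starting the induction.
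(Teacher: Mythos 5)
Your argument is sound and reaches the same conclusion, but by a genuinely different route from the paper's. The paper proves the staircase inequalities directly, by a two-parity induction (even and odd $n$, stepping by $2$): Player 1's strategy is always to take $1$, the case split is over Player 2's reply ($1$ or $2$), each branch lands in an inductive hypothesis at $n$, $n-1$, or $n-2$ stones, and the $W=2$ cases are handled by letting Player 2 mirror. You instead first observe that, because the steps of the $\{1,2\}$ staircase have width $L-1=1$, all four residue classes collapse to the single invariant ``$W(n;d,e)=1$ iff $e<d$'' on the middle-class band --- using $M_1(n)-M_2(n)\in\{0,1\}$ according to parity and $U_1(n)=U_2(n)$ exactly when $W(n)=2$ --- and then run a one-move, mover-versus-opponent induction on that invariant. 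Your identification of the mod-$6$ case split as (parity of $n$) crossed with ($W(n)=1$ or $2$) is a structural observation the paper never makes explicit, and your reduction is cleaner for this particular $A$; the trade-off is that the diagonal collapse is special to $L=2$ and does not transfer to the later staircase theorems, where the steps have genuine width and the paper's bookkeeping style is what gets reused. Both you and the paper leave the boundary work unexecuted (moves that exit the middle-class band, to be closed by the upper- and lower-class win-condition theorems, plus the narrow-band base cases where the band is empty); you at least name exactly where these occur and which theorem closes each, and those checks do go through --- for instance, since the mover has $X<U_1(n)$, and $U_1(n)$ is what a subgame with $W=2$ would require, the mover can never become upper class as the second player of such a subgame, so the one escape that would break the ``$X\le Y$ loses'' direction cannot occur. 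I would only ask you to state explicitly, as the paper's preamble does, that the claimed equivalence is asserted only on $M_1(n)\le d<U_1(n)$, $M_2(n)\le e<U_2(n)$; read off the band the bullets (and your invariant) are false.
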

\begin{proof}We will prove all of the statements using induction on $n$ and $n-1.$ It is easy to check the base cases. Observe that cases in which $n$ is even have essentially the same win conditions, apart from the ending subcases, which are trivial to prove. Similar for when $n$ is odd.
\par\noindent
Without loss of generality, let $n\equiv0 \pmod2.$ Assume that for every even integer up to $n:$ \begin{itemize}
    \item if $M_1(n)+k\leq d<M_1(n)+k+1,$ and $e<M_2(n)+k+1,$ then $W(n;d,e)=1.$ \hfill (1)
    \item if $M_1(n)+k\leq d<M_1(n)+k+1,$ and $e\geq M_2(n)+k+1,$ then $W(n;d,e)=2.$ \hfill (2)
\end{itemize}  
Moreover, assume that for every odd integer up to $n-1:$ 
\begin{itemize}
    \item if $M_1(n)+k+1\leq d<M_1(n)+k+2,$ and $e<M_2(n)+k+1,$ then $W(n;d,e)=1.$ \hfill(3)
    \item if $M_1(n)+k+1\leq d<M_1(n)+k+2,$ and $e\geq M_2(n)+k+1,$ then $W(n;d,e)=2.$ \hfill(4)
\end{itemize}
\par\noindent
Then we need to prove that the equivalent claims for $n+2$ and $n+1$ stones are true, respectively. 
\par\noindent
For $n+2$ stones, let us examine what happens when
\begin{align*}
    \frac{n}{2}+k+2\leq d<\frac{n}{2}+k+3 \ \text{and} \ e<\frac{n}{2}+k+2,
\end{align*}
where we have used our definitions for $M_1(n)$ and $M_2(n).$
\par\noindent
If both players each take $1$ on their first moves, we are back to the game where there are $n$ stones, $M_1(n)+k\leq d<M_1(n)+k+1,$ and $e<M_2(n)+k+1,$ for which we know that $W(n;d,e)=1.$
\par\noindent
If Player $1$ takes $1,$ but Player $2$ takes $2,$ both on their first moves, then 
\begin{align*}
    \frac{n}{2}+k+1\leq d<\frac{n}{2}+k+2 \ \text{and} \ e<\frac{n}{2}+k.
\end{align*}
Note that there are $n-1$ stones, so applying inductive hypothesis (3) yields
\begin{align*}
    \text{if \ } \frac{n}{2}+k+1 \leq d<\frac{n}{2}+k+2 \text{ and } e<\frac{n}{2}+k+1, \text{ then } W(n;d,e)=1.
\end{align*}
Comparing the two conditions yields the conclusion that Player $1$ wins the game even if Player $2$ takes $2,$ which completes part of the induction.
\par\noindent
For $n+2$ stones, now let us examine what happens when
\begin{align*}
    \frac{n}{2}+k+2\leq d<\frac{n}{2}+k+3 \ \text{and} \ e\geq\frac{n}{2}+k+2.
\end{align*}
If both players take $1$ on their first move, then it follows by the same reasoning as before that $W(n;d,e)=2.$
\par\noindent
By using the same reasoning, if Player $1$ now takes $2,$ Player $2$ can choose to take $2,$ so Player $2$ also wins, completing another part of the induction.
\par\noindent
For $n+1$ stones, now let us examine what happens when
\begin{align*}
    \frac{n}{2}+k+2\leq d<\frac{n}{2}+k+3 \ \text{and} \ e<\frac{n}{2}+k+2.
\end{align*}
If both players take $1$ on their first move, it follows that Player $1$ wins by the same reasoning as before.
\par\noindent
If Player $1$ takes 1, but Player $2$ takes $2,$ both on their first moves, then 
\begin{align*}
    \frac{n}{2}+k+1\leq d<\frac{n}{2}+k+2 \ \text{and} \ e<\frac{n}{2}+k.
\end{align*}
Note that there are $n-2$ stones, so applying inductive hypothesis (1) yields
\begin{align*}
    \text{if \ } \frac{n}{2}+k \leq d<\frac{n}{2}+k+1 \text{ and } e<\frac{n}{2}+k, \text{  then } W(n;d,e)=1.
\end{align*}
Note that Player $1$ having more money does not affect the outcome of the game in this case. Comparing the two conditions yields the conclusion that Player $1$ wins the game even if Player $2$ takes $2,$ which completes another part of the induction.
\par\noindent
For $n+1$ stones, now let us examine what happens when
\begin{align*}
    \frac{n}{2}+k+2\leq d<\frac{n}{2}+k+3 \ \text{and} \ e\geq\frac{n}{2}+k+2.
\end{align*}
If both players take $1$ on their first move, then it follows that $W(n;d,e)=2.$
\par\noindent
If Player $1$ now takes $2,$ Player $2$ can choose to take $2,$ and it follows that Player $2$ also wins, completing the induction. \end{proof}
\begin{theorem}[Staircase Theorem for $\{1,L\},L$ Even and $\geq 4$]
If $A=\{1,L\},L$ even and $\geq 4,$ we can determine who wins using the following statements.
\begin{enumerate}
    \item $n \equiv 0,2,4,\cdots,2k,\cdots,L-2\pmod{2L+2}:$
    \begin{itemize}
        \item If $d<M_1(n)+\frac{L}{2}-1,$ then $W(n;d,e)=2.$
        \item If $M_1(n)+\frac{L}{2}-1\leq d<M_1(n)+\frac{3L}{2}-2$ and $e<M_2(n)+L-1,$ then $W(n;d,e)=1.$
        \item If $M_1(n)+\frac{L}{2}-1\leq d<M_1(n)+\frac{3L}{2}-2$ and $e\geq M_2(n)+L-1,$ then $W(n;d,e)=2.$
        \item If $M_1(n)+\frac{3L}{2}-2\leq d<M_1(n)+\frac{5L}{2}-3$ and $e<M_2(n)+2L-2,$ then $W(n;d,e)=1.$
        \item If $M_1(n)+\frac{3L}{2}-2\leq d<M_1(n)+\frac{5L}{2}-3$ and $e\geq M_2(n)+2L-2,$ then $W(n;d,e)=2.$
        \\
        \\
        $\vdots$
        \\
        \item If $M_1(n)+(L-1)k+\frac{L}{2}-1\leq d<M_1(n)+(L-1)k+\frac{3L}{2}-2$ and $e<M_2(n)+(L-1)k+L-1,$ then $W(n;d,e)=1.$
        \item If $M_1(n)+(L-1)k+\frac{L}{2}-1\leq d<M_1(n)+(L-1)k+\frac{3L}{2}-2$ and $e\geq M_2(n)+(L-1)k+L-1,$ then $W(n;d,e)=2.$
        \\
        \\
        $\vdots$
        \\
        \item If $d\geq U_1(n)-L+1$ and $e<U_2(n)-L+1,$ then $W(n;d,e)=1.$
        \item If $d\geq U_1(n)-L+1$ and $e\geq U_2(n)-L+1,$ then $W(n;d,e)=2.$
    \end{itemize}
    \item $n\equiv 1,3,5,\cdots,2k+1,\cdots,L-1,2L+1\pmod{2L+2}:$
    \begin{itemize}
        \item If $d<M_1(n)+L-1$ and $e<M_2(n)+\frac{L}{2}-1,$ then $W(n;d,e)=2.$
        \item If $d<M_1(n)+L-1$ and $e\geq M_2(n)+\frac{L}{2}-1,$ then $W(n;d,e)=2.$
        \item If $M_1(n)+L-1\leq d<M_1(n)+2L-2$ and $e<M_2(n)+\frac{3L}{2}-2,$ then $W(n;d,e)=1.$
        \item If $M_1(n)+L-1\leq d<M_1(n)+2L-2$ and $e\geq M_2(n)+\frac{3L}{2}-2,$ then $W(n;d,e)=2.$
        \item If $M_1(n)+2L-2\leq d<M_1(n)+3L-3$ and $e<M_2(n)+\frac{5L}{2}-3,$ then $W(n;d,e)=1.$
        \item If $M_1(n)+2L-2\leq d<M_1(n)+3L-3$ and $e\geq M_2(n)+\frac{5L}{2}-3,$ then $W(n;d,e)=2.$
        \\
        \\
        $\vdots$
        \\
        \item If $M_1(n)+(L-1)k\leq d<M_1(n)+(L-1)k+L-1$ and $e<M_2(n)+(L-1)k+\frac{L}{2}-1,$ then $W(n;d,e)=1.$
        \item If $M_1(n)+(L-1)k\leq d<M_1(n)+(L-1)k+L-1$ and $e\geq M_2(n)+(L-1)k+\frac{L}{2}-1,$ then $W(n;d,e)=2.$
        \\
        \\
        $\vdots$
        \\
        \item If $d\geq U_1(n)-L+1,$ then $W(n;d,e)=1.$
    \end{itemize}
    \item $n\equiv L,L+2,L+4,\cdots,L+2k,\cdots,2L \pmod{2L+2}:$
    \begin{itemize}
        \item If $d<M_1(n)+\frac{L}{2}-1,$ then $W(n;d,e)=2.$
        \item If $M_1(n)+\frac{L}{2}-1\leq d<M_1(n)+\frac{3L}{2}-2$ and $e<M_2(n)+L-1,$ then $W(n;d,e)=1.$
        \item If $M_1(n)+\frac{L}{2}-1\leq d<M_1(n)+\frac{3L}{2}-2$ and $e\geq M_2(n)+L-1,$ then $W(n;d,e)=2.$
        \item If $M_1(n)+\frac{3L}{2}-2\leq d<M_1(n)+\frac{5L}{2}-3$ and $e<M_2(n)+2L-2,$ then $W(n;d,e)=1.$
        \item If $M_1(n)+\frac{3L}{2}-2\leq d<M_1(n)+\frac{5L}{2}-3$ and $e\geq M_2(n)+2L-2,$ then $W(n;d,e)=2.$
        \\
        \\
        $\vdots$
        \\
        \item If $M_1(n)+(L-1)k+\frac{L}{2}-1\leq d<M_1(n)+(L-1)k+\frac{3L}{2}-2$ and $e<M_2(n)+(L-1)k+L-1,$ then $W(n;d,e)=1.$
        \item If $M_1(n)+(L-1)k+\frac{L}{2}-1\leq d<M_1(n)+(L-1)k+\frac{3L}{2}-2$ and $e\geq M_2(n)+(L-1)k+L-1,$ then $W(n;d,e)=2.$
        \\
        \\
        $\vdots$
        \\
        \item If $d\geq U_1(n)-L+1,$ then $W(n;d,e)=1.$
    \end{itemize}
    \item $n\equiv L+1,L+3,L+5,\cdots,L+2k+1,\cdots,2L-1 \pmod{2L+2}:$
    \begin{itemize}
        \item If $d<M_1(n)+L-1$ and $e<M_2(n)+\frac{L}{2}-1,$ then $W(n;d,e)=2.$
        \item If $d<M_1(n)+L-1$ and $e\geq M_2(n)+\frac{L}{2}-1,$ then $W(n;d,e)=2.$
        \item If $M_1(n)+L-1\leq d<M_1(n)+2L-2$ and $e<M_2(n)+\frac{3L}{2}-2,$ then $W(n;d,e)=1.$
        \item If $M_1(n)+L-1\leq d<M_1(n)+2L-2$ and $e\geq M_2(n)+\frac{3L}{2}-2,$ then $W(n;d,e)=2.$
        \item If $M_1(n)+2L-2\leq d<M_1(n)+3L-3$ and $e<M_2(n)+\frac{5L}{2}-3,$ then $W(n;d,e)=1.$
        \item If $M_1(n)+2L-2\leq d<M_1(n)+3L-3$ and $e\geq M_2(n)+\frac{5L}{2}-3,$ then $W(n;d,e)=2.$
        \\
        \\
        $\vdots$
        \\
        \item If $M_1(n)+(L-1)k\leq d<M_1(n)+(L-1)k+L-1$ and $e<M_2(n)+(L-1)k+\frac{L}{2}-1,$ then $W(n;d,e)=1.$
        \item If $M_1(n)+(L-1)k\leq d<M_1(n)+(L-1)k+L-1$ and $e\geq M_2(n)+(L-1)k+\frac{L}{2}-1,$ then $W(n;d,e)=2.$
        \\
        \\
        $\vdots$
        \\
        \item If $d\geq U_1(n)-L+1$ and $e<U_2(n)-L+1,$ then $W(n;d,e)=1.$
        \item If $d\geq U_1(n)-L+1$ and $e\geq U_2(n)-L+1,$ then $W(n;d,e)=2.$
    \end{itemize}
\end{enumerate}
\end{theorem}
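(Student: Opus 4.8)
The plan is to prove all four residue cases simultaneously by strong induction on $n$, directly generalizing the induction already carried out for the Staircase Theorem for $\{1,2\}$ (where $2L+2=6$). I would first exploit the symmetry visible in the statement: cases (1) and (3) have identical win conditions, as do cases (2) and (4), so it suffices to treat one representative of each parity and then observe that the arithmetic mod $2L+2$ carries the relevant child positions into the companion case. The base cases are the small values of $n$ together with the two boundary regions of every staircase: the degenerate bottom step, where $d<M_1(n)+\tfrac{L}{2}-1$ (for even $n$) or $d<M_1(n)+L-1$ (for odd $n$) forces $W(n;d,e)=2$ regardless of $e$, and the top step $d\geq U_1(n)-L+1$, which must glue onto the upper-class regime. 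The bottom step is handled by Definition \ref{def:middle} and Definition \ref{def:lower} (Player $1$ simply cannot afford enough big moves), and the top step is handled by Lemma \ref{lem:upperclasswinner} together with the upper-class theorem and its $U_{\text{win}}/U_{\text{lose}}$ formulas for $\{1,L\}$, $L$ even.

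For the inductive step I would fix a state $(n;d,e)$ with both players middle class and lying in a named staircase step indexed by $k$, and analyze a full round as in the $\{1,2\}$ proof: I exhibit a move for the player designated to win and then check that every response of the opponent lands, after two plies, in a position the induction hypothesis already marks as a win for that same player. The engine of the argument is a small set of bookkeeping identities recording how each move transforms the residue of $n$ mod $2L+2$ and the two ``excesses'' $\delta_1=d-M_1(n)$ and $\delta_2=e-M_2(n)$. A big move (take $L$) sends $n\mapsto n-L$, hence shifts the residue by $-L$ and preserves parity, and it drops the mover's excess by exactly $\tfrac{L}{2}$ (since $M_1$ and $M_2$ each fall by $\tfrac{L}{2}$ while the mover's cash falls by $L$); this identity is clean and parity-independent. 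A small move (take $1$) shifts the residue by $-1$ and changes the excesses by $0$ or $1$ according to the parity of $n$, because $M_1$ and $M_2$ drop by $0$ or $1$ across a single decrement of $n$. Tracking these shifts lets me locate each child position in the correct case and step of the hypothesis.

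The step I expect to be the main obstacle is the threshold arithmetic for the take-$L$ move. Because a single big move lowers the mover's excess by $\tfrac{L}{2}$ while the staircase steps have width $L-1$, a big move does not advance the staircase index by one clean unit; the mismatch between $\tfrac{L}{2}$ and $L-1$ is exactly what produces the offsets $(L-1)k+\tfrac{L}{2}-1$ and $(L-1)k+L-1$ appearing in the statement. Verifying that these offsets are self-consistent under the recursion — that the mover's best big move always lands on the intended threshold of the companion residue class, never one step off, and that the opponent cannot use a big move of their own to escape — is the delicate part, and I would isolate it as the key computation rather than reverifying it inside every subcase. A secondary obstacle is the gluing at the two ends: at the top I must show the boundary $d\geq U_1(n)-L+1$ coincides with the upper-class regime, and at the bottom I must reconcile the collapse to ``$W(n;d,e)=2$ regardless of $e$'' with the middle/lower-class boundary of Definition \ref{def:lower}. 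Once these alignments are pinned down, the remaining subcases reduce to interval comparisons identical in spirit to the $\{1,2\}$ proof.
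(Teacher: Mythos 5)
Your proposal is correct and follows essentially the same route as the paper: a simultaneous induction on even and odd $n$ (the paper's ``induction on $n$ and $n-1$''), exploiting the identity of cases (1)/(3) and (2)/(4), analyzing a full round in which the designated winner takes $1$ and every opponent reply (take $1$ or take $L$) is checked against the inductive hypothesis for $n$, $n-1$, $n-L$, or $n+1-L$ stones, with the top and bottom of the staircase glued to the upper-class and lower-class regimes separately. Your explicit excess variables $\delta_1,\delta_2$ and the isolated $\tfrac{L}{2}$-versus-$(L-1)$ threshold computation are exactly the arithmetic the paper carries out inline when it compares the post-move intervals to hypotheses (5) and (7).
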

\begin{proof}We will prove all of the statements using induction on $n$ and $n-1,$ using the same argument in the proof for $\{1,2\}.$ It is easy to check the base cases and the ending subcases.
\par\noindent
Without loss of generality, let $n\equiv0 \pmod2.$ Assume that for every even integer up to $n:$ 
\begin{itemize}
    \item If $M_1(n)+(L-1)k+\frac{L}{2}-1\leq d<M_1(n)+(L-1)k+\frac{3L}{2}-2$ and $e<M_2(n)+(L-1)k+L-1,$ then $W(n;d,e)=1.$ \hfill (5)
    \item If $M_1(n)+(L-1)k+\frac{L}{2}-1\leq d<M_1(n)+(L-1)k+\frac{3L}{2}-2$ and $e\geq M_2(n)+(L-1)k+L-1,$ then $W(n;d,e)=2.$ \hfill (6)
\end{itemize}  Moreover, assume that for every odd integer up to $n-1:$ 
\begin{itemize}
    \item If $M_1(n)+(L-1)k\leq d<M_1(n)+(L-1)k+L-1$ and $e<M_2(n)+(L-1)k+\frac{L}{2}-1,$ then $W(n;d,e)=1.$ \hfill (7)
    \item If $M_1(n)+(L-1)k\leq d<M_1(n)+(L-1)k+L-1$ and $e\geq M_2(n)+(L-1)k+\frac{L}{2}-1,$ then $W(n;d,e)=2.$ \hfill (8)
\end{itemize}
\par\noindent
Then we need to prove that the equivalent claims for $n+2$ and $n+1$ stones are true, respectively. 
\par\noindent
For $n+2$ stones, let us examine what happens when
\begin{align*}
    \frac{n}{2}+2+(L-1)k+\frac{L}{2}-1\leq d<\frac{n}{2}+2+(L-1)k+\frac{3L}{2}-2 \ \text{and} \ e<\frac{n}{2}+1+(L-1)k+\frac{L}{2}-1,
\end{align*}
where we have used our definitions for $M_1(n)$ and $M_2(n).$
\par\noindent
If both players each take $1$ on their first moves, we are back to the game where there are $n$ stones, $M_1(n)+(L-1)k\leq d<M_1(n)+(L-1)k+L-1,$ and $e<M_2(n)+(L-1)k+\frac{L}{2}-1,$ for which we know that $W(n;d,e)=1.$
\par\noindent
If Player $1$ takes $1,$ but Player $2$ takes $L,$ both on their first moves, then 
\begin{align*}
    \frac{n}{2}+(L-1)k+\frac{L}{2}\leq d<\frac{n}{2}+(L-1)k+\frac{3L}{2}-1 \ \text{and} \ e<\frac{n}{2}+(L-1)k-\frac{L}{2}.
\end{align*}
Note that there are $n+1-L$ stones, so applying inductive hypothesis (7) yields
\begin{align*}
    \text{if \ } \frac{n}{2}+(L-1)k-\frac{L}{2}+1\leq d<\frac{n}{2}+(L-1)k+\frac{L}{2} \text{ and } e<\frac{n}{2}+(L-1)k, \text{ then } W(n;d,e)=1.
\end{align*}
Note that in this case, Player $1$ having more money and Player $2$ having less money does not affect the outcome. Comparing the two conditions yields the conclusion that Player $1$ wins the game even if Player $2$ takes $L,$ which completes part of the induction.
\par\noindent
For $n+2$ stones, now let us examine what happens when
\begin{align*}
    \frac{n}{2}+2+(L-1)k+\frac{L}{2}-1\leq d<\frac{n}{2}+2+(L-1)k+\frac{3L}{2}-2 \ \text{and} \ e\geq\frac{n}{2}+1+(L-1)k+\frac{L}{2}-1,
\end{align*}
If both players take $1$ on their first move, then it follows by the same reasoning as before that $W(n;d,e)=2.$
\par\noindent
By using the same reasoning, if Player $1$ now takes $L,$ Player $2$ can choose to take $L,$ so Player $2$ also wins, completing another part of the induction.
\par\noindent
For $n+1$ stones, now let us examine what happens when
\begin{align*}
    \frac{n}{2}+1+(L-1)k\leq d<\frac{n}{2}+1+(L-1)k+L-1 \ \text{and} \ e<\frac{n}{2}+1+(L-1)k+\frac{L}{2}-1.
\end{align*}
If both players take $1$ on their first move, it follows that Player $1$ wins by the same reasoning as before. 
\par\noindent
If Player $1$ takes 1, but Player $2$ takes $L,$ both on their first moves, then 
\begin{align*}
    \frac{n}{2}+(L-1)k\leq d<\frac{n}{2}+(L-1)k+L-1 \ \text{and} \ e<\frac{n}{2}+(L-1)k-\frac{L}{2}.
\end{align*}
Note that there are $n-L$ stones, so applying inductive hypothesis (5) yields
\begin{align*}
    \text{if \ } \frac{n}{2}+(L-1)k\leq d<\frac{n}{2}+(L-1)k+\frac{L}{2}-2 \text{ and } e<\frac{n}{2}+(L-1)k+\frac{L}{2}-1, \text{ then } W(n;d,e)=1.
\end{align*}
Note that Player $1$ having more money and Player $2$ having less money does not affect the outcome of the game in this case. Comparing the two conditions yields the conclusion that Player $1$ wins the game even if Player $2$ takes $L,$ which completes another part of the induction.
\par\noindent
For $n+1$ stones, now let us examine what happens when
\begin{align*}
    \frac{n}{2}+1+(L-1)k\leq d<\frac{n}{2}+1+(L-1)k+L-1 \ \text{and} \ e\geq\frac{n}{2}+1+(L-1)k+\frac{L}{2}-1.
\end{align*}
If both players take $1$ on their first move, then it follows that $W(n;d,e)=2.$
\par\noindent
If Player $1$ now takes $L,$ Player $2$ can choose to take $L,$ and it follows that Player $2$ also wins.
\par\noindent
The edge cases in the win conditions are trivial to prove, so we are done with our induction. \end{proof}
\begin{theorem}[Staircase Theorem for $\{1,2,3\}$]
If $A=\{1,2,3\},$ we can determine who wins using the following statements.
\begin{enumerate}
    \item $n \equiv 0\pmod{4}:$
    \begin{itemize}
        \item If $d<M_1(n)+1$ and $e<M_2(n)+1,$ then $W(n;d,e)=1.$
        \item If $d<M_1(n)+1$ and $e\geq M_2(n)+1,$ then $W(n;d,e)=2.$
        \item If $M_1(n)+1\leq d<M_1(n)+2$ and $e<M_2(n)+2,$ then $W(n;d,e)=1.$
        \item If $M_1(n)+1\leq d<M_1(n)+2$ and $e\geq M_2(n)+2,$ then $W(n;d,e)=2.$
        \item If $M_1(n)+2\leq d<M_1(n)+3$ and $e<M_2(n)+3,$ then $W(n;d,e)=1.$
        \item If $M_1(n)+2\leq d<M_1(n)+3$ and $e\geq M_2(n)+3,$ then $W(n;d,e)=2.$
        \\
        \\
        $\vdots$
        \\
        \item If $M_1(n)+k\leq d<M_1(n)+k+1$ and $e<M_2(n)+k+1,$ then $W(n;d,e)=1.$
        \item If $M_1(n)+k\leq d<M_1(n)+k+1$ and $e\geq M_2(n)+k+1,$ then $W(n;d,e)=2.$
        \\
        \\
        $\vdots$
        \\
        \item If $d\geq U_1(n)-1$ and $e<U_2(n)-1,$ then $W(n;d,e)=1.$
        \item If $d\geq U_1(n)-1$ and $e\geq U_2(n)-1,$ then $W(n;d,e)=2.$
    \end{itemize}
    \item $n\equiv 1,3\pmod{4}:$
    \begin{itemize}
        \item If $d<M_1(n)+1,$ then $W(n;d,e)=2.$
        \item If $M_1(n)+1\leq d<M_1(n)+2$ and $e<M_2(n)+1,$ then $W(n;d,e)=1.$
        \item If $M_1(n)+1\leq d<M_1(n)+2$ and $e\geq M_2(n)+1,$ then $W(n;d,e)=2.$
        \item If $M_1(n)+2\leq d<M_1(n)+3$ and $e<M_2(n)+2,$ then $W(n;d,e)=1.$
        \item If $M_1(n)+2\leq d<M_1(n)+3$ and $e\geq M_2(n)+2,$ then $W(n;d,e)=2.$
        \item If $M_1(n)+3\leq d<M_1(n)+4$ and $e<M_2(n)+3,$ then $W(n;d,e)=1.$
        \item If $M_1(n)+3\leq d<M_1(n)+4$ and $e\geq M_2(n)+3,$ then $W(n;d,e)=2.$
        \\
        \\
        $\vdots$
        \\
        \item If $M_1(n)+k+1\leq d<M_1(n)+k+2$ and $e<M_2(n)+k+1,$ then $W(n;d,e)=1.$
        \item If $M_1(n)+k+1\leq d<M_1(n)+k+2$ and $e\geq M_2(n)+k+1,$ then $W(n;d,e)=2.$
        \\
        \\
        $\vdots$
        \\
        \item If $d\geq U_1(n)-1,$ then $W(n;d,e)=1.$
    \end{itemize}
    \item $n \equiv 2\pmod{4}:$
    \begin{itemize}
        \item If $d<M_1(n)+1$ and $e<M_2(n)+1,$ then $W(n;d,e)=1.$
        \item If $d<M_1(n)+1$ and $e\geq M_2(n)+1,$ then $W(n;d,e)=2.$
        \item If $M_1(n)+1\leq d<M_1(n)+2$ and $e<M_2(n)+2,$ then $W(n;d,e)=1.$
        \item If $M_1(n)+1\leq d<M_1(n)+2$ and $e\geq M_2(n)+2,$ then $W(n;d,e)=2.$
        \item If $M_1(n)+2\leq d<M_1(n)+3$ and $e<M_2(n)+3,$ then $W(n;d,e)=1.$
        \item If $M_1(n)+2\leq d<M_1(n)+3$ and $e\geq M_2(n)+3,$ then $W(n;d,e)=2.$
        \\
        \\
        $\vdots$
        \\
        \item If $M_1(n)+k\leq d<M_1(n)+k+1$ and $e<M_2(n)+k+1,$ then $W(n;d,e)=1.$
        \item If $M_1(n)+k\leq d<M_1(n)+k+1$ and $e\geq M_2(n)+k+1,$ then $W(n;d,e)=2.$
        \\
        \\
        $\vdots$
        \\
        \item If $d\geq U_1(n)-1,$ then $W(n;d,e)=1.$
    \end{itemize}
\end{enumerate}
\end{theorem}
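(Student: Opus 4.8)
The plan is to prove all four residue cases by a single simultaneous induction on $n$, stepping in parallel through one even value and one odd value, exactly as in the Staircase Theorem for $\{1,2\}$. Since $\{1,2,3\}=\{1,L,L+1\}$ with $L=2$ and $\min(A)=1$, the governing idea is unchanged: inside the middle-class band (where $M\le d,e<U$, so by Definition \ref{def:middle} neither player is ever forced to stop for lack of a legal move until the strategic money is spent) the default move is the miserly move of taking $1$, and the staircase boundary records how one extra dollar lets a player survive one more miserly exchange. First I would record the base cases (small $n$) and note, as the paper does for $\{1,2\}$, that the two even residues $0,2\pmod 4$ share the same staircase body and differ only in their top (``ending'') subcases, and likewise for the odd residues $1,3\pmod4$; this lets me argue one even and one odd representative.

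For the inductive step I would, without loss of generality, take $n$ even and assume the generic middle-class step for all even values up to $n$ and all odd values up to $n-1$ (the analogues of hypotheses (1)--(4) and (5)--(8) in the earlier proofs). To prove the generic $W=1$ claim at $n+2$ stones in step $k$, I let Player $1$ play miserly (take $1$) and examine Player $2$'s three replies. Taking $1$ returns the game to $n$ stones with both purses reduced by $1$, landing in the even step-$k$ hypothesis; taking $2$ returns it to $n-1$ stones (odd) with $d,e$ reduced by $1,2$, landing in the odd hypothesis; taking $3$ returns it to $n-2$ stones (even) with $d,e$ reduced by $1,3$, landing in the even step-$(k+1)$ hypothesis. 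In each case the reduced pair $(d,e)$ falls inside a region the hypothesis marks $W=1$ (the $e$-bound is in fact satisfied with room to spare in the ``take $3$'' branch), so Player $1$ wins no matter what. The companion $W=2$ claim uses the mirror strategy: Player $2$ answers a Player-$1$ move of $a\in\{1,2,3\}$ by also taking $a$, each reply dropping the pile by $2a$ to an even position further down the staircase that the hypothesis marks $W=2$. The $n+1$ (odd) claims are handled identically, with Player $1$ taking $1$ and the three replies now landing at $n,n-1,n-2$ stones.

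The routine steps are the parity and $\lfloor\cdot\rfloor/\lceil\cdot\rceil$ bookkeeping in the $M$-formulas and the verification that each reduced position lies in the claimed row; these are the same ``compare the two conditions'' computations carried out for $\{1,2\}$, now repeated with an extra branch. The one genuinely new ingredient, and the place I expect the main obstacle, is the third move option: I must confirm that Player $2$'s ability to take $3$ never escapes the inductive net, i.e.\ that the step-$(k+1)$ even hypothesis really does cover the ``take $3$'' landing spot, and that matching remains optimal for the defender in the $W=2$ direction. Establishing exhaustiveness of the $\{1,2,3\}$ case split — that no reply beats taking $1$ for the attacker, nor matching for the defender — is the crux; everything else transfers essentially verbatim from the $\{1,2\}$ argument.

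Finally, I would dispatch the top of each staircase — the subcases phrased with $U_1(n)-1$ and $U_2(n)-1$ — separately, since there a reduction can push a player out of the middle class into the upper class. Those rows follow from the upper-class win theorem together with Lemma \ref{lem:upperclasswinner} and the normal-NIM value $W(n)$, which for $A=\{1,2,3\}$ equals $2$ exactly when $n\equiv0\pmod4$; this is precisely why the even residues $0$ and $2$ differ only in their caps. With the base cases, the generic step, and these caps in hand, the induction closes.
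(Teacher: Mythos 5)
Your proposal follows exactly the route the paper takes: the paper's own proof of this theorem simply says to repeat the parallel induction on $n$ and $n-1$ from the $\{1,2\}$ staircase theorem while accounting for the extra ``take $3$'' move, leaving the details as an exercise, and your outline (miserly play for the attacker, mirroring for the defender, the three-branch case check, and the separate treatment of the $U_1(n)-1$ caps via the upper-class theorem) is precisely that argument carried out. The bookkeeping you describe checks out, so your write-up is, if anything, more complete than the paper's.
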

\begin{proof} We proceed with the same method of induction outlined in the proof of $A=\{1,2\},$ but this time, we have to account for the cases when the player takes $3.$ We leave the proof of these win conditions as an exercise to the reader. \end{proof}
\begin{theorem}[Staircase Theorem for $\{1,L,L+1\},L$ Even and $\geq 4$]
If $A=\{1,L,L+1\},L$ even and $\geq 4,$ we can determine who wins using the following statements.
\begin{enumerate}
    \item $n \equiv 0,2,4,\cdots,2k,\cdots,L-2,L+2,L+4,L+6,\cdots,L+2k+2,\cdots,2L-2\pmod{2L}:$
    \begin{itemize}
        \item If $d<M_1(n)+\frac{L}{2}-1,$ then $W(n;d,e)=2.$
        \item If $M_1(n)+\frac{L}{2}-1\leq d<M_1(n)+L-1$ and $e<M_2(n)+\frac{L}{2},$ then $W(n;d,e)=1.$
        \item If $M_1(n)+\frac{L}{2}-1\leq d<M_1(n)+L-1$ and $e\geq M_2(n)+\frac{L}{2},$ then $W(n;d,e)=2.$
        \item If $M_1(n)+L-1\leq d<M_1(n)+\frac{3L}{2}-1$ and $e<M_2(n)+L,$ then $W(n;d,e)=1.$
        \item If $M_1(n)+L-1\leq d<M_1(n)+\frac{3L}{2}-1$ and $e\geq M_2(n)+L,$ then $W(n;d,e)=2.$
        \item If $M_1(n)+\frac{3L}{2}-1\leq d<M_1(n)+2L-1$ and $e<M_2(n)+\frac{3L}{2},$ then $W(n;d,e)=1.$
        \item If $M_1(n)+\frac{3L}{2}-1\leq d<M_1(n)+2L-1$ and $e\geq M_2(n)+\frac{3L}{2},$ then $W(n;d,e)=2.$
        \\
        \\
        $\vdots$
        \\
        \item If $M_1(n)+\frac{Lk}{2}-1\leq d<M_1(n)+\frac{L(k+1)}{2}-1$ and $e<M_2(n)+\frac{Lk}{2},$ then $W(n;d,e)=1.$
        \item If $M_1(n)+\frac{Lk}{2}-1\leq d<M_1(n)+\frac{L(k+1)}{2}-1$ and $e\geq M_2(n)+\frac{Lk}{2},$ then $W(n;d,e)=2.$
        \\
        \\
        $\vdots$
        \\
        \item If $d\geq U_1(n)-\frac{L}{2}$ and $e<U_2(n)-\frac{L}{2},$ then $W(n;d,e)=1.$
        \item If $d\geq U_1(n)-\frac{L}{2}$ and $e\geq U_2(n)-\frac{L}{2},$ then $W(n;d,e)=2.$
    \end{itemize}
    \item $n\equiv 1,3,5,\cdots,2k+1,\cdots,2L-1\pmod{2L}:$
    \begin{itemize}
        \item If $d<M_1(n)+\frac{L}{2}$ and $e<M_2(n)+\frac{L}{2}-1,$ then $W(n;d,e)=1.$
        \item If $d<M_1(n)+\frac{L}{2}$ and $e\geq M_2(n)+\frac{L}{2}-1,$ then $W(n;d,e)=2.$
        \item If $M_1(n)+\frac{L}{2}\leq d<M_1(n)+L$ and $e<M_2(n)+L-1,$ then $W(n;d,e)=1.$
        \item If $M_1(n)+\frac{L}{2}\leq d<M_1(n)+L$ and $e\geq M_2(n)+L-1,$ then $W(n;d,e)=2.$
        \item If $M_1(n)+L\leq d<M_1(n)+\frac{3L}{2}$ and $e<M_2(n)+\frac{3L}{2}-1,$ then $W(n;d,e)=1.$
        \item If $M_1(n)+L\leq d<M_1(n)+\frac{3L}{2}$ and $e\geq M_2(n)+\frac{3L}{2}-1,$ then $W(n;d,e)=2.$
        \\
        \\
        $\vdots$
        \\
        \item If $M_1(n)+\frac{Lk}{2}\leq d<M_1(n)+\frac{L(k+1)}{2}$ and $e<M_2(n)+\frac{L(k+1)}{2}-1,$ then $W(n;d,e)=1.$
        \item If $M_1(n)+\frac{Lk}{2}\leq d<M_1(n)+\frac{L(k+1)}{2}$ and $e\geq M_2(n)+\frac{L(k+1)}{2}-1,$ then $W(n;d,e)=2.$
        \\
        \\
        $\vdots$
        \\
        \item If $d\geq U_1(n)-\frac{L}{2},$ then $W(n;d,e)=1.$
    \end{itemize}
    \item $n\equiv L\pmod{2L}:$
    \begin{itemize}
        \item If $d<M_1(n)+\frac{L}{2}-1,$ then $W(n;d,e)=2.$
        \item If $M_1(n)+\frac{L}{2}-1\leq d<M_1(n)+L-1$ and $e<M_2(n)+\frac{L}{2},$ then $W(n;d,e)=1.$
        \item If $M_1(n)+\frac{L}{2}-1\leq d<M_1(n)+L-1$ and $e\geq M_2(n)+\frac{L}{2},$ then $W(n;d,e)=2.$
        \item If $M_1(n)+L-1\leq d<M_1(n)+\frac{3L}{2}-1$ and $e<M_2(n)+L,$ then $W(n;d,e)=1.$
        \item If $M_1(n)+L-1\leq d<M_1(n)+\frac{3L}{2}-1$ and $e\geq M_2(n)+L,$ then $W(n;d,e)=2.$
        \item If $M_1(n)+\frac{3L}{2}-1\leq d<M_1(n)+2L-1$ and $e<M_2(n)+\frac{3L}{2},$ then $W(n;d,e)=1.$
        \item If $M_1(n)+\frac{3L}{2}-1\leq d<M_1(n)+2L-1$ and $e\geq M_2(n)+\frac{3L}{2},$ then $W(n;d,e)=2.$
        \\
        \\
        $\vdots$
        \\
        \item If $M_1(n)+\frac{Lk}{2}-1\leq d<M_1(n)+\frac{L(k+1)}{2}-1$ and $e<M_2(n)+\frac{Lk}{2},$ then $W(n;d,e)=1.$
        \item If $M_1(n)+\frac{Lk}{2}-1\leq d<M_1(n)+\frac{L(k+1)}{2}-1$ and $e\geq M_2(n)+\frac{Lk}{2},$ then $W(n;d,e)=2.$
        \\
        \\
        $\vdots$
        \\
        \item If $d\geq U_1(n)-\frac{L}{2},$ then $W(n;d,e)=1.$
    \end{itemize}
\end{enumerate}
\end{theorem}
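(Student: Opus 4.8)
The plan is to reuse, essentially verbatim, the inductive machinery from the proofs of the Staircase Theorems for $\{1,L\}$ and $\{1,2,3\}$: induct on $n$, proving the even-residue statements (Cases~1 and~3, which have the identical staircase shape) together with the odd-residue statement (Case~2), advancing from $n$ to $n+2$ and from $n-1$ to $n+1$ simultaneously. The base cases and the final, near-upper-class subcases are verified directly as before. The single new feature relative to $\{1,L\}$ is that each player now has three legal moves — taking $1$, $L$, or $L+1$ — so at every step we must show the designated winner defeats \emph{all three} of the opponent's possible first replies rather than two.

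Fix $n\equiv 0\pmod 2$ without loss of generality and assume the general staircase step for all even $m\le n$ and all odd $m\le n-1$; explicitly, for even $m$ and each $k$,
\begin{align*}
&M_1(m)+\tfrac{Lk}{2}-1 \le d < M_1(m)+\tfrac{L(k+1)}{2}-1,\ \ e < M_2(m)+\tfrac{Lk}{2}\ \Rightarrow\ W(m;d,e)=1,\\
&M_1(m)+\tfrac{Lk}{2}-1 \le d < M_1(m)+\tfrac{L(k+1)}{2}-1,\ \ e \ge M_2(m)+\tfrac{Lk}{2}\ \Rightarrow\ W(m;d,e)=2,
\end{align*}
together with the analogous pair for odd $m$, in which the $d$-window is shifted up by one and the $e$-threshold is replaced by $M_2(m)+\tfrac{L(k+1)}{2}-1$. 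To close the induction I would then establish the $n+2$ and $n+1$ statements. In each subcase the winner opens by taking $1$ (when Player~2 is the winner, Player~2 instead mirrors Player~1's move). If the opponent also takes $1$, the round removes two stones and we land back on the $m$-stone game, where the matching hypothesis applies directly. The real work is in the deviations.

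When the opponent instead takes $L$ or $L+1$, I would compute the resulting position $(m';d',e')$, locate the staircase step it occupies, and invoke the appropriate hypothesis. The governing observation is that a deviation is expensive: when it is the \emph{losing} player who takes $L$ or $L+1$, their cash drops by $L$ or $L+1$, pushing them down two or more staircase steps — far enough that, even after accounting for the smaller pile, they remain inside a losing region. As in the $\{1,L\}$ proof, exact threshold equality is never needed: once the new position is seen to lie in a region the winner controls, monotonicity (more money for the winner, or less for the loser, never flips the outcome) absorbs the constant-sized slack between the two descriptions.

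The main obstacle is precisely this alignment, now doubled. One must check, for the take-$L$ \emph{and} the take-$(L+1)$ branch and for \emph{both} parities of the target pile, that the shifted thresholds $M_1,M_2$ and the step index $k$ fit a valid instance of the inductive hypothesis. Because a round in which the opponent takes $L$ removes an odd total of $L+1$ stones while taking $1$ or $L+1$ removes an even total, the take-$L$ branch flips parity and invokes the odd-$m$ hypothesis whereas the others stay even; keeping this bookkeeping straight, especially near the boundaries of each staircase and at the transitional residue $n\equiv L\pmod{2L}$ isolated in Case~3, is where the care is required. These are the same arithmetic comparisons already performed for $\{1,L\}$, merely duplicated to cover the extra move.
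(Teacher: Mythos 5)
Your proposal follows essentially the same route as the paper's own proof: a simultaneous induction advancing $n\to n+2$ and $n-1\to n+1$, with the winner opening with $1$ (or mirroring), the take-$1$ reply reducing to the hypothesis directly, and the take-$L$ and take-$L{+}1$ replies landing on the odd- and even-parity hypotheses respectively (exactly the parity bookkeeping you describe), with monotonicity in $d$ and $e$ absorbing the leftover slack. The paper simply carries out the explicit interval arithmetic for each branch that you defer, so your plan and its proof coincide in substance.
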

\begin{proof}We will prove all of the statements using induction on $n$ and $n-1,$ using the same argument in the proofs for $\{1,L\}.$ 
\par\noindent
Without loss of generality, let $n\equiv0 \pmod2.$ Assume that for every even integer up to $n:$ 
\begin{itemize}
    \item If $M_1(n)+\frac{Lk}{2}-1\leq d<M_1(n)+\frac{L(k+1)}{2}-1$ and $e<M_2(n)+\frac{Lk}{2},$ then $W(n;d,e)=1.$ \hfill (9)
    \item If $M_1(n)+\frac{Lk}{2}-1\leq d<M_1(n)+\frac{L(k+1)}{2}-1$ and $e\geq M_2(n)+\frac{Lk}{2},$ then $W(n;d,e)=2.$ \hfill (10)
\end{itemize}  
Moreover, assume that for every odd integer up to $n-1:$ 
\begin{itemize}
    \item If $M_1(n)+\frac{Lk}{2}\leq d<M_1(n)+\frac{L(k+1)}{2}$ and $e<M_2(n)+\frac{L(k+1)}{2}-1,$ then $W(n;d,e)=1.$ \hfill (11)
    \item If $M_1(n)+\frac{Lk}{2}\leq d<M_1(n)+\frac{L(k+1)}{2}$ and $e\geq M_2(n)+\frac{L(k+1)}{2}-1,$ then $W(n;d,e)=2.$ \hfill (12)
\end{itemize}
\par\noindent
Then we need to prove that the equivalent claims for $n+2$ and $n+1$ stones are true, respectively. 
\par\noindent
For $n+2$ stones, let us examine what happens when
\begin{align*}
    \frac{n}{2}+2+\frac{Lk}{2}-1\leq d<\frac{n}{2}+2+\frac{L(k+1)}{2}-1 \ \text{and} \ e<\frac{n}{2}+1+\frac{Lk}{2}, 
\end{align*}
where we have used our definitions for $M_1(n)$ and $M_2(n).$
\par\noindent
If both players each take $1$ on their first moves, we are back to the game where there are $n$ stones, $M_1(n)+\frac{Lk}{2}-1\leq d<M_1(n)+\frac{L(k+1)}{2}-1,$ and $e<M_2(n)+\frac{Lk}{2},$ for which we know that $W(n;d,e)=1.$
\par\noindent
If Player $1$ takes $1,$ but Player $2$ takes $L,$ both on their first moves, then 
\begin{align*}
    \frac{n}{2}+\frac{Lk}{2}\leq d<\frac{n}{2}+\frac{L(k+1)}{2} \ \text{and} \ e<\frac{n}{2}+\frac{Lk}{2}+1-L.
\end{align*}
Note that there are $n+1-L$ stones, so applying inductive hypothesis (11) yields
\begin{align*}
    \text{if \ } \frac{n}{2}+\frac{L(k-1)}{2}+1\leq d<\frac{n}{2}+\frac{Lk}{2}+1 \text{ and } e<\frac{n}{2}+\frac{Lk}{2}, \text{ then } W(n;d,e)=1.
\end{align*}
Note that in this case, Player $1$ having more money and Player $2$ having less money does not affect the outcome. Comparing the two conditions yields the conclusion that Player $1$ wins the game even if Player $2$ takes $L,$ which completes part of the induction.
\par\noindent
If Player $1$ takes $1,$ but Player $2$ takes $L+1,$ both on their first moves, then
\begin{align*}
    \frac{n}{2}+\frac{Lk}{2}\leq d<\frac{n}{2}+\frac{L(k+1)}{2} \ \text{and} \ e<\frac{n}{2}+\frac{L(k-2)}{2}.
\end{align*}
Note that there are $n-L$ stones, so applying inductive hypothesis (9) yields
\begin{align*}
    \text{if \ } \frac{n}{2}+\frac{L(k-1)}{2}-1\leq d<\frac{n}{2}+\frac{Lk}{2}-1 \text{ and } e<\frac{n}{2}+\frac{L(k-1)}{2}, \text{ then } W(n;d,e)=1.
\end{align*}
Note that in this case, Player $1$ having more money and Player $2$ having less money does not affect the outcome. Comparing the two conditions yields the conclusion that Player $1$ wins the game even if Player $2$ takes $L+1,$ which completes another part of the induction.
\par\noindent
For $n+2$ stones, now let us examine what happens when
\begin{align*}
    \frac{n}{2}+2+\frac{Lk}{2}-1\leq d<\frac{n}{2}+2+\frac{L(k+1)}{2}-1 \ \text{and} \ e\geq\frac{n}{2}+1+\frac{Lk}{2}, 
\end{align*}
If both players take $1$ on their first move, then it follows by the same reasoning as before that $W(n;d,e)=2.$
\par\noindent
By using the same reasoning, if Player $1$ now takes $L$ or $L+1,$ Player $2$ can choose to take the same amount, so Player $2$ also wins, completing another part of the induction.
\par\noindent
For $n+1$ stones, now let us examine what happens when
\begin{align*}
     \frac{n}{2}+\frac{Lk}{2}+1\leq d<\frac{n}{2}+\frac{L(k+1)}{2}+1 \ \text{and} \ e<\frac{n}{2}+\frac{L(k+1)}{2}
\end{align*}
If both players take $1$ on their first move, it follows that Player $1$ wins by the same reasoning as before. 
\par\noindent
If Player $1$ takes 1, but Player $2$ takes $L,$ both on their first moves, then 
\begin{align*}
    \frac{n}{2}+\frac{Lk}{2}\leq d<\frac{n}{2}+\frac{L(k+1)}{2} \ \text{and} \ e<\frac{n}{2}+\frac{L(k-1)}{2}.
\end{align*}
Note that there are $n-L$ stones, so applying inductive hypothesis (9) yields
\begin{align*}
    \text{if } \frac{n}{2}+\frac{L(k-1)}{2}\leq d<\frac{n}{2}+\frac{Lk}{2} \text{ and } e<\frac{n}{2}+\frac{L(k-1)}{2}, \text{ then } W(n;d,e)=1.
\end{align*}
Note that Player $1$ having more money does not affect the outcome of the game in this case. Comparing the two conditions yields the conclusion that Player $1$ wins the game even if Player $2$ takes $L,$ which completes another part of the induction.
\par\noindent
If Player $1$ takes 1, but Player $2$ takes $L+1,$ both on their first moves, then 
\begin{align*}
    \frac{n}{2}+\frac{Lk}{2}\leq d<\frac{n}{2}+\frac{L(k+1)}{2} \ \text{and} \ e<\frac{n}{2}+\frac{L(k-1)}{2}-1.
\end{align*}
Note that there are $n-L-1$ stones, so applying inductive hypothesis (11) yields
\begin{align*}
    \text{if \ } \frac{n}{2}+\frac{L(k-1)}{2}\leq d<\frac{n}{2}+\frac{Lk}{2} \text{ and } e<\frac{n}{2}+\frac{Lk}{2}-1, \text{ then }W(n;d,e)=1.
\end{align*}
Note that Player $2$ having less money does not affect the outcome of the game in this case. Comparing the two conditions yields the conclusion that Player $1$ wins the game even if Player $2$ takes $L+1,$ which completes another part of the induction.
\par\noindent
For $n+1$ stones, now let us examine what happens when
\begin{align*}
    \frac{n}{2}+\frac{Lk}{2}+1\leq d<\frac{n}{2}+\frac{L(k+1)}{2}+1 \ \text{and} \ e\geq\frac{n}{2}+\frac{L(k+1)}{2}.
\end{align*}
If both players take $1$ on their first move, then it follows that $W(n;d,e)=2.$
\par\noindent
If Player $1$ now takes $L$ or $L+1,$ Player $2$ can choose to take the same amount, and it follows that Player $2$ also wins.
\par\noindent
The ending subcases in the win conditions are trivial to prove, so we are done with our induction.
\end{proof}
\begin{theorem}[Staircase Theorem for $\{1,L,L+1\},L$ Odd]
\par\noindent
If $A=\{1,L,L+1\},L$ odd, we can determine who wins using the following statements.
\begin{enumerate}
    \item If $n\equiv 0,2,4,\cdots,2k,\cdots,L-1,3L+3,3L+5,3L+7,\cdots,3L+2k+3,\cdots,4L \pmod{4L+2}:$
    \begin{itemize}
        \item If $d<M_1(n)+\frac{L-1}{2},$ then $W(n;d,e)=2.$
        \item If $M_1(n)+\frac{L-1}{2}\leq d<M_1(n)+\frac{L-1}{2}+L$ and $e<M_2(n)+L,$ then $W(n;d,e)=1.$
        \item If $M_1(n)+\frac{L-1}{2}\leq d<M_1(n)+\frac{L-1}{2}+L$ and $e\geq M_2(n)+L,$ then $W(n;d,e)=2.$
        \item If $M_1(n)+\frac{L-1}{2}+L\leq d<M_1(n)+\frac{L-1}{2}+2L$ and $e<M_2(n)+2L,$ then $W(n;d,e)=1.$
        \item If $M_1(n)+\frac{L-1}{2}+L\leq d<M_1(n)+\frac{L-1}{2}+2L$ and $e\geq M_2(n)+2L,$ then $W(n;d,e)=2.$
        \item If $M_1(n)+\frac{L-1}{2}+2L\leq d<M_1(n)+\frac{L-1}{2}+3L$ and $e<M_2(n)+3L,$ then $W(n;d,e)=1.$
        \item If $M_1(n)+\frac{L-1}{2}+2L\leq d<M_1(n)+\frac{L-1}{2}+3L$ and $e\geq M_2(n)+3L,$ then $W(n;d,e)=2.$
        \\
        \\
        $\vdots$
        \\
        \item If $M_1(n)+\frac{L-1}{2}+Lk\leq d<M_1(n)+\frac{L-1}{2}+L(k+1)$ and $e<M_2(n)+L(k+1),$ then $W(n;d,e)=1.$
        \item If $M_1(n)+\frac{L-1}{2}+Lk\leq d<M_1(n)+\frac{L-1}{2}+L(k+1)$ and $e\geq M_2(n)+L(k+1),$ then $W(n;d,e)=2.$
        \\
        \\
        $\vdots$
        \\
        \item If $d\geq U_1(n)-\frac{L-1}{2},$ then $W(n;d,e)=1.$
    \end{itemize}
    \item If $n\equiv 1,3,5,\cdots,2k+1,\cdots,L,3L+4,3L+6,\cdots,3L+2k+4,\cdots,4L+1 \pmod{4L+2}:$
    \begin{itemize}
        \item If $d<M_1(n)+L$ and $e<M_2(n)+\frac{L-1}{2},$ then $W(n;d,e)=1.$
        \item If $d<M_1(n)+L$ and $e\geq M_2(n)+\frac{L-1}{2},$ then $W(n;d,e)=2.$
        \item If $M_1(n)+L\leq d<M_1(n)+2L$ and $e<M_2(n)+\frac{L-1}{2}+L,$ then $W(n;d,e)=1.$
        \item If $M_1(n)+L\leq d<M_1(n)+2L$ and $e\geq M_2(n)+\frac{L-1}{2}+L,$ then $W(n;d,e)=2.$
        \item If $M_1(n)+2L\leq d<M_1(n)+3L$ and $e<M_2(n)+\frac{L-1}{2}+2L,$ then $W(n;d,e)=1.$
        \item If $M_1(n)+2L\leq d<M_1(n)+3L$ and $e\geq M_2(n)+\frac{L-1}{2}+2L,$ then $W(n;d,e)=2.$
        \\
        \\
        $\vdots$
        \\
        \item If $M_1(n)+Lk\leq d<M_1(n)+L(k+1)$ and $e<M_2(n)+\frac{L-1}{2}+Lk,$ then $W(n;d,e)=1.$
        \item If $M_1(n)+Lk\leq d<M_1(n)+L(k+1)$ and $e\geq M_2(n)+\frac{L-1}{2}+Lk,$ then $W(n;d,e)=2.$
        \\
        \\
        $\vdots$
        \\
        \item If $d\geq U_1(n)-L$ and $e<U_2(n)-\frac{L-1}{2},$ then $W(n;d,e)=1.$
        \item If $d\geq U_1(n)-L$ and $e\geq U_2(n)-\frac{L-1}{2},$ then $W(n;d,e)=2.$
    \end{itemize}
    \item If $n\equiv L+1 \pmod{4L+2}:$
    \begin{itemize}
    \item If $d<M_1(n)+\frac{L-1}{2},$ then $W(n;d,e)=2.$
    \item If $M_1(n)+\frac{L-1}{2}\leq d<M_1(n)+\frac{L-1}{2}+L$ and $e<M_2(n)+L,$ then $W(n;d,e)=1.$
    \item If $M_1(n)+\frac{L-1}{2}\leq d<M_1(n)+\frac{L-1}{2}+L$ and $e\geq M_2(n)+L,$ then $W(n;d,e)=2.$
    \item If $M_1(n)+\frac{L-1}{2}+L\leq d<M_1(n)+\frac{L-1}{2}+2L$ and $e<M_2(n)+2L,$ then $W(n;d,e)=1.$
    \item If $M_1(n)+\frac{L-1}{2}+L\leq d<M_1(n)+\frac{L-1}{2}+2L$ and $e\geq M_2(n)+2L,$ then $W(n;d,e)=2.$
    \item If $M_1(n)+\frac{L-1}{2}+2L\leq d<M_1(n)+\frac{L-1}{2}+3L$ and $e<M_2(n)+3L,$ then $W(n;d,e)=1.$
    \item If $M_1(n)+\frac{L-1}{2}+2L\leq d<M_1(n)+\frac{L-1}{2}+3L$ and $e\geq M_2(n)+3L,$ then $W(n;d,e)=2.$
    \\
    \\
    $\vdots$
    \\
    \item If $M_1(n)+\frac{L-1}{2}+Lk\leq d<M_1(n)+\frac{L-1}{2}+L(k+1)$ and $e<M_2(n)+L(k+1),$ then $W(n;d,e)=1.$
    \item If $M_1(n)+\frac{L-1}{2}+Lk\leq d<M_1(n)+\frac{L-1}{2}+L(k+1)$ and $e\geq M_2(n)+L(k+1),$ then $W(n;d,e)=2.$
    \\ 
    \\
    $\vdots$
    \\
    \item If $d\geq U_1(n)-L,$ then $W(n;d,e)=1.$
    \end{itemize}
    \item If $n\equiv L+2,L+4,L+6,\cdots,L+2k+2,\cdots,3L \pmod{4L+2}:$
    \begin{itemize}
        \item If $d<M_1(n)+L$ and $e<M_2(n)+\frac{L-1}{2},$ then $W(n;d,e)=1.$
        \item If $d<M_1(n)+L$ and $e\geq M_2(n)+\frac{L-1}{2},$ then $W(n;d,e)=2.$
        \item If $M_1(n)+L\leq d<M_1(n)+2L$ and $e<M_2(n)+\frac{L-1}{2}+L,$ then $W(n;d,e)=1.$
        \item If $M_1(n)+L\leq d<M_1(n)+2L$ and $e\geq M_2(n)+\frac{L-1}{2}+L,$ then $W(n;d,e)=2.$
        \item If $M_1(n)+2L\leq d<M_1(n)+3L$ and $e<M_2(n)+\frac{L-1}{2}+2L,$ then $W(n;d,e)=1.$
        \item If $M_1(n)+2L\leq d<M_1(n)+3L$ and $e\geq M_2(n)+\frac{L-1}{2}+2L,$ then $W(n;d,e)=2.$
        \\
        \\
        $\vdots$
        \\
        \item If $M_1(n)+Lk\leq d<M_1(n)+L(k+1)$ and $e<M_2(n)+\frac{L-1}{2}+Lk,$ then $W(n;d,e)=1.$
        \item If $M_1(n)+Lk\leq d<M_1(n)+L(k+1)$ and $e\geq M_2(n)+\frac{L-1}{2}+Lk,$ then $W(n;d,e)=2.$
        \\
        \\
        $\vdots$
        \\
        \item If $d\geq U_1(n)-\frac{L-1}{2},$ then $W(n;d,e)=1.$
    \end{itemize}
    \item If $n\equiv L+3,L+5,\cdots,L+2k+1,\cdots,3L+1 \pmod{4L+2}:$
    \begin{itemize}
        \item If $d<M_1(n)+\frac{L-1}{2},$ then $W(n;d,e)=2.$
        \item If $M_1(n)+\frac{L-1}{2}\leq d<M_1(n)+\frac{L-1}{2}+L$ and $e<M_2(n)+L,$ then $W(n;d,e)=1.$
        \item If $M_1(n)+\frac{L-1}{2}\leq d<M_1(n)+\frac{L-1}{2}+L$ and $e\geq M_2(n)+L,$ then $W(n;d,e)=2.$
        \item If $M_1(n)+\frac{L-1}{2}+L\leq d<M_1(n)+\frac{L-1}{2}+2L$ and $e<M_2(n)+2L,$ then $W(n;d,e)=1.$
        \item If $M_1(n)+\frac{L-1}{2}+L\leq d<M_1(n)+\frac{L-1}{2}+2L$ and $e\geq M_2(n)+2L,$ then $W(n;d,e)=2.$
        \item If $M_1(n)+\frac{L-1}{2}+2L\leq d<M_1(n)+\frac{L-1}{2}+3L$ and $e<M_2(n)+3L,$ then $W(n;d,e)=1.$
        \item If $M_1(n)+\frac{L-1}{2}+2L\leq d<M_1(n)+\frac{L-1}{2}+3L$ and $e\geq M_2(n)+3L,$ then $W(n;d,e)=2.$
        \\
        \\
        $\vdots$
        \\
        \item If $M_1(n)+\frac{L-1}{2}+Lk\leq d<M_1(n)+\frac{L-1}{2}+L(k+1)$ and $e<M_2(n)+L(k+1),$ then $W(n;d,e)=1.$
        \item If $M_1(n)+\frac{L-1}{2}+Lk\leq d<M_1(n)+\frac{L-1}{2}+L(k+1)$ and $e\geq M_2(n)+L(k+1),$ then $W(n;d,e)=2.$
        \\
        \\
        $\vdots$
        \\
        \item If $d\geq U_1(n)-L$ and $e<U_2(n)-\frac{L-1}{2},$ then $W(n;d,e)=1.$
        \item If $d\geq U_1(n)-L$ and $e\geq U_2(n)-\frac{L-1}{2},$ then $W(n;d,e)=2.$
    \end{itemize}    
    \item If $n\equiv 3L+2 \pmod{4L+2}:$
    \begin{itemize}
        \item If $d<M_1(n)+L$ and $e<M_2(n)+\frac{L-1}{2},$ then $W(n;d,e)=1.$
        \item If $d<M_1(n)+L$ and $e\geq M_2(n)+\frac{L-1}{2},$ then $W(n;d,e)=2.$
        \item If $M_1(n)+L\leq d<M_1(n)+2L$ and $e<M_2(n)+\frac{L-1}{2}+L,$ then $W(n;d,e)=1.$
        \item If $M_1(n)+L\leq d<M_1(n)+2L$ and $e\geq M_2(n)+\frac{L-1}{2}+L,$ then $W(n;d,e)=2.$
        \item If $M_1(n)+2L\leq d<M_1(n)+3L$ and $e<M_2(n)+\frac{L-1}{2}+2L,$ then $W(n;d,e)=1.$
        \item If $M_1(n)+2L\leq d<M_1(n)+3L$ and $e\geq M_2(n)+\frac{L-1}{2}+2L,$ then $W(n;d,e)=2.$
        \\
        \\
        $\vdots$
        \\
        \item If $M_1(n)+Lk\leq d<M_1(n)+L(k+1)$ and $e<M_2(n)+\frac{L-1}{2}+Lk,$ then $W(n;d,e)=1.$
        \item If $M_1(n)+Lk\leq d<M_1(n)+L(k+1)$ and $e\geq M_2(n)+\frac{L-1}{2}+Lk,$ then $W(n;d,e)=2.$
        \\
        \\
        $\vdots$
        \\
        \item If $d\geq U_1(n)-L,$ then $W(n;d,e)=1.$
    \end{itemize}
\end{enumerate}
\end{theorem}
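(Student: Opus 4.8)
The plan is to mirror the inductive scheme used for $\{1,L,L+1\}$ with $L$ even, adapting the bookkeeping to odd parity. I would argue by simultaneous induction on $n$ and $n-1$, establishing the claims for $n+2$ and $n+1$ stones from hypotheses that assert the staircase pattern for all smaller pile sizes; the base cases and the ending (upper-class) subcases are routine, so I would dispose of them first. The six residue groups collapse into two generic middle-class patterns: an even-$n$ pattern (cases 1, 3, 5) with step $M_1(n)+\frac{L-1}{2}+Lk \le d < M_1(n)+\frac{L-1}{2}+L(k+1)$ and losing threshold $e < M_2(n)+L(k+1)$, and an odd-$n$ pattern (cases 2, 4, 6) with step $M_1(n)+Lk \le d < M_1(n)+L(k+1)$ and threshold $e < M_2(n)+\frac{L-1}{2}+Lk$. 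Recording the winning and losing versions of each gives four inductive hypotheses, exactly paralleling the hypotheses $(9)$--$(12)$ of the even-$L$ proof.

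The core of the argument is the case analysis on Player $2$'s first reply. If both players take $1$, the position collapses to a pile of the same parity but size reduced by $2$; since $M_1(n+2)=M_1(n)+1$ and $M_2(n+2)=M_2(n)+1$, subtracting the one dollar each player spends aligns the intervals perfectly, so the same-parity hypothesis transfers the win (or loss) directly. The two genuine deviations are Player $2$ taking $L$ or $L+1$ after Player $1$ takes $1$, and here the odd parity of $L$ reverses the roles seen in the even-$L$ proof: a reply of $L$ removes $L+1$ stones in the round, which is even, so the parity is preserved and the same-parity hypothesis applies at size $n+1-L$; a reply of $L+1$ removes $L+2$ stones, odd, flipping the parity and invoking the opposite-parity hypothesis at size $n-L$. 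For each deviation I would compute the resulting $(d,e)$ ranges explicitly from $M_1(n)=\frac{n}{2}+1,\ M_2(n)=\frac{n}{2}$ (even $n$) and $M_1(n)=M_2(n)=\frac{n+1}{2}$ (odd $n$), and check containment in the interval named by the hypothesis, using monotonicity of $W$ in the two money arguments (more cash for Player $1$ only helps Player $1$, less cash for Player $2$ only hurts Player $2$) to absorb the slack---this is precisely the ``does not affect the outcome'' step of the even-$L$ proof.

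The main obstacle I anticipate is the arithmetic alignment of the staircase levels across the parity shifts, compounded by the larger period. Because $L$ is odd the period is $4L+2=2(2L+1)$ rather than $2L$: the middle-class interior depends only on the parity of $n$ (pattern A for even, pattern B for odd, with the offset $\frac{L-1}{2}$ migrating from the $d$-thresholds to the $e$-thresholds between them), whereas the ending subcases, where the staircase meets the upper-class thresholds $U_1(n)-\frac{L-1}{2}$ or $U_1(n)-L$, are governed by the finer residue through the closed forms of $U_{\text{win}}$ and $U_{\text{lose}}$. The delicate verification is that after a deviation of $L$ or $L+1$ the staircase index $k$ shifts by exactly the right amount (by $0$ or $\pm 1$) so that the computed endpoints coincide with the hypothesis endpoints with neither gap nor overlap; an off-by-one in a floor or in the half-integer term $\frac{L-1}{2}$ would silently break the induction. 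Once the boundary residues $n \equiv L+1$ and $n \equiv 3L+2$ (cases 3 and 6) and the topmost steps are dispatched by hand, the interior steps follow uniformly from the two deviation computations, completing the induction.
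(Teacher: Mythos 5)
Your proposal follows essentially the same route as the paper, whose entire proof of this theorem is to invoke the argument from the $\{1,L,L+1\}$, $L$ even case and leave the details as an exercise; you correctly identify the needed adaptations (the parity bookkeeping reversing which of the $L$ and $L+1$ deviations preserves parity, the four inductive hypotheses paralleling $(9)$--$(12)$, and the monotonicity-in-cash step used to absorb slack in the interval comparisons). In fact your outline is more explicit than the paper's, and the obstacles you flag are exactly the ones the even-$L$ proof handles by direct computation.
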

\begin{proof}We employ the same argument as in the proof for $\{1,L,L+1\},L$ even. The details of the proof are left as an exercise to the reader.
\end{proof}
\par This proves all of our conjectured win conditions. We proceed with schematic diagrams of the middle class win conditions in order to supplement our proofs.
\begin{example} If we plot all lattice points $(d,e)$ such that both players are middle class, and color them either green or red, for when Player $1$ or Player $2$ wins, respectively, we obtain a "staircase." 
\par Here is an example for when $n=43,$ and $A=\{1,6\}:$
\begin{center}
    \begin{asy}
    unitsize(20);
    defaultpen(fontsize(10pt));
        dot((22,22),green);
dot((22,23),green);
dot((22,24),red);
dot((22,25),red);
dot((22,26),red);
dot((22,27),red);
dot((22,28),red);
dot((22,29),red);
dot((22,30),red);
dot((22,31),red);
dot((22,32),red);
dot((22,33),red);
dot((23,22),green);
dot((23,23),green);
dot((23,24),red);
dot((23,25),red);
dot((23,26),red);
dot((23,27),red);
dot((23,28),red);
dot((23,29),red);
dot((23,30),red);
dot((23,31),red);
dot((23,32),red);
dot((23,33),red);
dot((24,22),green);
dot((24,23),green);
dot((24,24),red);
dot((24,25),red);
dot((24,26),red);
dot((24,27),red);
dot((24,28),red);
dot((24,29),red);
dot((24,30),red);
dot((24,31),red);
dot((24,32),red);
dot((24,33),red);
dot((25,22),green);
dot((25,23),green);
dot((25,24),red);
dot((25,25),red);
dot((25,26),red);
dot((25,27),red);
dot((25,28),red);
dot((25,29),red);
dot((25,30),red);
dot((25,31),red);
dot((25,32),red);
dot((25,33),red);
dot((26,22),green);
dot((26,23),green);
dot((26,24),red);
dot((26,25),red);
dot((26,26),red);
dot((26,27),red);
dot((26,28),red);
dot((26,29),red);
dot((26,30),red);
dot((26,31),red);
dot((26,32),red);
dot((26,33),red);
dot((27,22),green);
dot((27,23),green);
dot((27,24),green);
dot((27,25),green);
dot((27,26),green);
dot((27,27),green);
dot((27,28),green);
dot((27,29),red);
dot((27,30),red);
dot((27,31),red);
dot((27,32),red);
dot((27,33),red);
dot((28,22),green);
dot((28,23),green);
dot((28,24),green);
dot((28,25),green);
dot((28,26),green);
dot((28,27),green);
dot((28,28),green);
dot((28,29),red);
dot((28,30),red);
dot((28,31),red);
dot((28,32),red);
dot((28,33),red);
dot((29,22),green);
dot((29,23),green);
dot((29,24),green);
dot((29,25),green);
dot((29,26),green);
dot((29,27),green);
dot((29,28),green);
dot((29,29),red);
dot((29,30),red);
dot((29,31),red);
dot((29,32),red);
dot((29,33),red);
dot((30,22),green);
dot((30,23),green);
dot((30,24),green);
dot((30,25),green);
dot((30,26),green);
dot((30,27),green);
dot((30,28),green);
dot((30,29),red);
dot((30,30),red);
dot((30,31),red);
dot((30,32),red);
dot((30,33),red);
dot((31,22),green);
dot((31,23),green);
dot((31,24),green);
dot((31,25),green);
dot((31,26),green);
dot((31,27),green);
dot((31,28),green);
dot((31,29),red);
dot((31,30),red);
dot((31,31),red);
dot((31,32),red);
dot((31,33),red);
dot((32,22),green); 
dot((32,23),green);
dot((32,24),green);
dot((32,25),green);
dot((32,26),green);
dot((32,27),green);
dot((32,28),green);
dot((32,29),green);
dot((32,30),green);
dot((32,31),green);
dot((32,32),green);
dot((32,33),green);
dot((33,22),green);
dot((33,23),green);
dot((33,24),green);
dot((33,25),green);
dot((33,26),green);
dot((33,27),green);
dot((33,28),green);
dot((33,29),green);
dot((33,30),green);
dot((33,31),green);
dot((33,32),green);
dot((33,33),green);
dot((34,22),green);
dot((34,23),green);
dot((34,24),green);
dot((34,25),green);
dot((34,26),green);
dot((34,27),green);
dot((34,28),green);
dot((34,29),green);
dot((34,30),green);
dot((34,31),green);
dot((34,32),green);
dot((34,33),green);
dot((35,22),green);
dot((35,23),green);
dot((35,24),green);
dot((35,25),green);
dot((35,26),green);
dot((35,27),green);
dot((35,28),green);
dot((35,29),green);
dot((35,30),green);
dot((35,31),green);
dot((35,32),green);
dot((35,33),green);
dot((36,22),green);
dot((36,23),green);
dot((36,24),green);
dot((36,25),green);
dot((36,26),green);
dot((36,27),green);
dot((36,28),green);
dot((36,29),green);
dot((36,30),green);
dot((36,31),green);
dot((36,32),green);
dot((36,33),green);
label("22",(22,21));
label("23",(23,21));
label("24",(24,21));
label("25",(25,21));
label("26",(26,21));
label("27",(27,21));
label("28",(28,21));
label("29",(29,21));
label("30",(30,21));
label("31",(31,21));
label("32",(32,21));
label("33",(33,21));
label("34",(34,21));
label("35",(35,21));
label("36",(36,21));
label("$d$",(37,21));
label("22",(21,22));
label("23",(21,23));
label("24",(21,24));
label("25",(21,25));
label("26",(21,26));
label("27",(21,27));
label("28",(21,28));
label("29",(21,29));
label("30",(21,30));
label("31",(21,31));
label("32",(21,32));
label("33",(21,33));
label("$e$",(21,34));
    \end{asy}
\end{center}
From our win conditions for $\{1,L\},L$ even, we can verify that each "step" on the staircase is $L-1$ units long and $L-1$ units tall. In this case, observe that each "step" is $5$ units long and $5$ units tall.
\par
Here is an example for when $n=42,$ and $A=\{1,3,4\}:$
\begin{center}
    \begin{asy}
    unitsize(20);
    defaultpen(fontsize(10pt));
    dot((22,21),red);
dot((22,22),red);
dot((22,23),red);
dot((22,24),red);
dot((22,25),red);
dot((22,26),red);
dot((22,27),red);
dot((22,28),red);
dot((22,29),red);
dot((23,21),green);
dot((23,22),green);
dot((23,23),green);
dot((23,24),red);
dot((23,25),red);
dot((23,26),red);
dot((23,27),red);
dot((23,28),red);
dot((23,29),red);
dot((24,21),green);
dot((24,22),green);
dot((24,23),green);
dot((24,24),red);
dot((24,25),red);
dot((24,26),red);
dot((24,27),red);
dot((24,28),red);
dot((24,29),red);
dot((25,21),green);
dot((25,22),green);
dot((25,23),green);
dot((25,24),red);
dot((25,25),red);
dot((25,26),red);
dot((25,27),red);
dot((25,28),red);
dot((25,29),red);
dot((26,21),green);
dot((26,22),green);
dot((26,23),green);
dot((26,24),green);
dot((26,25),green);
dot((26,26),green);
dot((26,27),red);
dot((26,28),red);
dot((26,29),red);
dot((27,21),green);
dot((27,22),green);
dot((27,23),green);
dot((27,24),green);
dot((27,25),green);
dot((27,26),green);
dot((27,27),red);
dot((27,28),red);
dot((27,29),red);
dot((28,21),green);
dot((28,22),green);
dot((28,23),green);
dot((28,24),green);
dot((28,25),green);
dot((28,26),green);
dot((28,27),red);
dot((28,28),red);
dot((28,29),red);
dot((29,21),green);
dot((29,22),green);
dot((29,23),green);
dot((29,24),green);
dot((29,25),green);
dot((29,26),green);
dot((29,27),green);
dot((29,28),green);
dot((29,29),green);
label("22",(22,20));
label("23",(23,20));
label("24",(24,20));
label("25",(25,20));
label("26",(26,20));
label("27",(27,20));
label("28",(28,20));
label("29",(29,20));
label("$d$",(30,20));
label("21",(21,21));
label("22",(21,22));
label("23",(21,23));
label("24",(21,24));
label("25",(21,25));
label("26",(21,26));
label("27",(21,27));
label("28",(21,28));
label("29",(21,29));
label("$e$",(21,30));
    \end{asy}
\end{center}
From our win conditions for $\{1,L,L+1\},L$ odd, we can verify that each "step" on the staircase is $L$ units long and $L$ units tall. In this case, observe that each "step" is $3$ units long and $3$ units tall.
\par
Here is an example for when $n=48,$ and $A=\{1,4,5\}:$
\begin{center}
    \begin{asy}
    unitsize(20);
    defaultpen(fontsize(10pt));
    dot((25,24),red);
dot((25,25),red);
dot((25,26),red);
dot((25,27),red);
dot((25,28),red);
dot((25,29),red);
dot((25,30),red);
dot((25,31),red);
dot((25,32),red);
dot((25,33),red);
dot((25,34),red);
dot((25,35),red);
dot((26,24),green);
dot((26,25),green);
dot((26,26),red);
dot((26,27),red);
dot((26,28),red);
dot((26,29),red);
dot((26,30),red);
dot((26,31),red);
dot((26,32),red);
dot((26,33),red);
dot((26,34),red);
dot((26,35),red);
dot((27,24),green);
dot((27,25),green);
dot((27,26),red);
dot((27,27),red);
dot((27,28),red);
dot((27,29),red);
dot((27,30),red);
dot((27,31),red);
dot((27,32),red);
dot((27,33),red);
dot((27,34),red);
dot((27,35),red);
dot((28,24),green);
dot((28,25),green);
dot((28,26),green);
dot((28,27),green);
dot((28,28),red);
dot((28,29),red);
dot((28,30),red);
dot((28,31),red);
dot((28,32),red);
dot((28,33),red);
dot((28,34),red);
dot((28,35),red);
dot((29,24),green);
dot((29,25),green);
dot((29,26),green);
dot((29,27),green);
dot((29,28),red);
dot((29,29),red);
dot((29,30),red);
dot((29,31),red);
dot((29,32),red);
dot((29,33),red);
dot((29,34),red);
dot((29,35),red);
dot((30,24),green);
dot((30,25),green);
dot((30,26),green);
dot((30,27),green);
dot((30,28),green);
dot((30,29),green);
dot((30,30),red);
dot((30,31),red);
dot((30,32),red);
dot((30,33),red);
dot((30,34),red);
dot((30,35),red);
dot((31,24),green);
dot((31,25),green);
dot((31,26),green);
dot((31,27),green);
dot((31,28),green);
dot((31,29),green);
dot((31,30),red);
dot((31,31),red);
dot((31,32),red);
dot((31,33),red);
dot((31,34),red);
dot((31,35),red);
dot((32,24),green);
dot((32,25),green);
dot((32,26),green);
dot((32,27),green);
dot((32,28),green);
dot((32,29),green);
dot((32,30),green);
dot((32,31),green);
dot((32,32),red);
dot((32,33),red);
dot((32,34),red);
dot((32,35),red);
dot((33,24),green);
dot((33,25),green);
dot((33,26),green);
dot((33,27),green);
dot((33,28),green);
dot((33,29),green);
dot((33,30),green);
dot((33,31),green);
dot((33,32),red);
dot((33,33),red);
dot((33,34),red);
dot((33,35),red);
dot((34,24),green);
dot((34,25),green);
dot((34,26),green);
dot((34,27),green);
dot((34,28),green);
dot((34,29),green);
dot((34,30),green);
dot((34,31),green);
dot((34,32),green);
dot((34,33),green);
dot((34,34),red);
dot((34,35),red);
dot((35,24),green);
dot((35,25),green);
dot((35,26),green);
dot((35,27),green);
dot((35,28),green);
dot((35,29),green);
dot((35,30),green);
dot((35,31),green);
dot((35,32),green);
dot((35,33),green);
dot((35,34),red);
dot((35,35),red);
label("25",(25,23));
label("26",(26,23));
label("27",(27,23));
label("28",(28,23));
label("29",(29,23));
label("30",(30,23));
label("31",(31,23));
label("32",(32,23));
label("33",(33,23));
label("34",(34,23));
label("35",(35,23));
label("$d$",(36,23));
label("24",(24,24));
label("25",(24,25));
label("26",(24,26));
label("27",(24,27));
label("28",(24,28));
label("29",(24,29));
label("30",(24,30));
label("31",(24,31));
label("32",(24,32));
label("33",(24,33));
label("34",(24,34));
label("35",(24,35));
label("$e$",(24,36));
    \end{asy}
\end{center}
From our win conditions for $\{1,L,L+1\},L$ even, we can verify that each "step" on the staircase is $\frac{L}{2}$ units long and $\frac{L}{2}$ units tall. In this case, observe that each "step" is $2$ units long and $2$ units tall.
\end{example}


\end{document}